\documentclass{amsart}

\usepackage{amsmath, amsthm, amscd, amsfonts, amssymb, graphicx, color,float,pgf,tikz}
\usepackage{amssymb,fontenc}
\usepackage{latexsym,wasysym,mathrsfs}
\usepackage{hyperref}
\usepackage{subcaption}
\usepackage{soul}

\newtheorem{theorem}{Theorem}[section]
\newtheorem{lemma}[theorem]{Lemma}
\newtheorem{proposition}[theorem]{Proposition}
\newtheorem{corollary}[theorem]{Corollary}

\theoremstyle{definition}
\newtheorem{definition}[theorem]{Definition}
\newtheorem{example}[theorem]{Example}

\title{A Combinatorial Problem in Cinema Seating}
\author[M. Mirzavaziri and D. Yaqubi ]{Madjid Mirzavaziri and Daniel Yaqubi$^{1*}$ }

\address{ $^{*}$ Department of Pure Mathematics, Faculty of Mathematical Sciences,  University of Torbat-e Jam, Torbat-e Jam, Iran.}
\email{Daniel\_yaqubi@yahoo.es, yaqubi@tjamcaas.ac.ir}

\address{ Department of Pure Mathematics, Faculty of Mathematical Sciences,  Ferdowsi University of Mashhad, P. O. Box 1159-91775, Mashhad, Iran.}
\email{mirzavaziri@gmail.com}

\subjclass[2020]{Primary: 03D20, 03D25, 05A05, 05A10, 05A15. Secondary: 05D05, 06A06, 06A07, 06E30.}

\keywords{Cinema mapping; total cinema function; Dedekind's number; one seat to full cinema mapping; conquester chain; almost full cinema mapping.}

\begin{document}

\begin{abstract}
We address a problem concerning cinema audiences: ``A cinema has $n$ seats numbered from $1$ to $n$, and there are $n$ people with tickets numbered from $1$ to $n$. People enter the cinema in order. If someone has the ticket number $i$, they can choose seats whose numbers are multiples of $i$. They should exit the cinema if the permitted seats are occupied by previous audience members. In how many ways can they be seated under these conditions?" We give an algorithm to create the list of situations that meet these conditions. We also focus on finding the number of situations in two special cases: when exactly one seat is unoccupied whose total number is denoted by $\omega(n)$, and when all audiences $1, \ldots, n-1$ are seated, whose total number is denoted by $\psi(n)$. Giving the recursive formula $\psi(n)=1+\sum_{d|n, d\neq n}\psi(d)$ with the initial value $\psi(1)=1$, we provide an explicit formula for $\psi(p^\alpha q^\beta)$, where $p$ and $q$ are distinct prime numbers. Furthermore, we show that $\omega(n)=-n+\sum_{i=1}^n\psi(i)$.
\end{abstract}

\maketitle

\section{Introduction}

The \textit{Dedekind numbers}, named after the renowned German mathematician Julius Wilhelm Richard Dedekind, are a sequence of combinatorial numbers with a rich history and wide-ranging applications \cite{Dedekind1897, BermanKohler1976, Church1940, Kisielewicz1988, KleitmanMarkowsky1975, Wiedemann1991}. These numbers are intimately connected to various areas of mathematics, including number theory, combinatorics, and algebra. Originally introduced to enumerate the number of different modular lattices or order ideals in posets \cite{Dedekind1897}, Dedekind numbers have found their way into diverse fields. They appear in the study of distributive lattices \cite{BermanKohler1976}, graph theory \cite{Church1940}, and even in algebraic geometry \cite{Kisielewicz1988}. For example, they are associated with counting factorizations of algebraic integers into prime ideals in number fields, a fundamental concept in algebraic number theory \cite{KleitmanMarkowsky1975}. Additionally, Dedekind numbers have intriguing connections to partitions, Bell numbers, and the theory of partitions \cite{Kisielewicz1988, Korshunov1981, Zaguia1993}. Their presence across such a spectrum of mathematical domains underscores their significance and makes them a fascinating subject of exploration.

A Dedekind number, often denoted as $D_n$, is a combinatorial number that counts the number of \textit{non-increasing Boolean functions} on a finite poset $(\mathsf{P},\preceq)$ of size $n$. In other words, $D_n$ represents the number of non-increasing functions from the poset $\mathsf{P}$ to the two-element chain $\{0,1\}$. 

For a positive integer $n$, the set $\mathsf{P}_n=\{1,\ldots,n\}$ is a poset equipped with \textit{divisibility as order}, in the sense that $i\preceq j$ whenever $i|j$. Thus a boolean non-increasing function $\delta:\mathsf{P}_n\to\{0,1\}$ has the property that if $\delta(j)$ is non-zero, then $\delta(i)$ is non-zero for each divisor $i$ of $j$. 
This motivates us to consider all mappings $\Sigma:\mathsf{P}_n\to\mathsf{P}_n\cup\{0\}$ with the property that $\Sigma(j)\neq0$ implies $\Sigma(i)\neq0$ for each divisor $i$ of $j$. We add the extra property $i|\Sigma(i)$ for each $i\in\mathsf{P}_n$ to these kind of mappings and consider several enumeration and extreme problems about the so-called \textit{cinema mappings}. 

The above discussion shows that the idea of our approach is taken from the Dedekind Boolean non-increasing functions on the poset $(\mathsf{P}_n,|)$.

In this paper, we delve into the intriguing world of cinema mappings, which serve as a captivating framework for studying various seating arrangements in a cinema hall.  Additionally, we introduce a novel function, the \textit{total cinema function}, which captures the total count of cinema mappings for a given number of seats. We give an upper bound for the total cinema function which reveals a profound connection between cinema mappings and the multiplicative divisor function. We also give an algorithm to create all cinema mappings with $n$ seats. Our journey takes us through different scenarios, each shedding light on specific aspects of cinema mappings.

The central theme of this paper revolves around the classification and enumeration of cinema mappings based on two distinct scenarios: ``One Seat to Full" and ``Last Audience Is in the Waiting List." These scenarios pose intriguing questions and lead us to profound results in combinatorial theory.

In the first scenario, ``One Seat to Full," we explore cinema mappings where precisely one seat remains unoccupied. We define these mappings as \textit{one seat to full}, and our objective is to determine the number of such mappings for a given number of seats. We establish a connection between these mappings and a fundamental concept which we call it as \textit{conquester chains}, providing insights into the structural properties of cinema mappings.

Moving forward, the second scenario, ``Last Audience Is in the Waiting List," presents an equally captivating study. Here, we investigate cinema mappings where all but perhaps one seat are occupied, and the last audience remains in a waiting list. This intriguing scenario leads us to develop a recursive formula to calculate the number of these mappings, shedding light on the richness of this combinatorial problem. 

The $\psi$ function defined in the section ``Last Audience Is in the Waiting List" has been previously studied and is documented in the On-Line Encyclopedia of Integer Sequences (OEIS) as sequence A067824. This sequence provides further context and highlights the significance of our findings. We give an explicit formula for $\psi(n)$ in cases where $n$ has at most two distinct prime factors.

Throughout this paper, we employ various theorems and lemmas to support our findings and provide recursive and explicit formulas for counting special kinds of these cinema mappings.

\section{Preliminaries}
Let $n$ be a positive integer. Throughout the paper, we use the notation $[n]$ to denote the set $\{1,\ldots,n\}$. Recall that a \textit{poset} $(\mathsf{P},\preceq)$ is a set $\mathsf{P}$ together with a partial ordering $\preceq$ on it. As an example, we can consider the \textit{divisibility} as an order on $[n]$. In this sense, $([n],|)$ forms a poset.

If $(\mathsf{P},\preceq)$ and $(\mathsf{P}',\preceq')$ are two posets, then a function $\delta:\mathsf{P}\to \mathsf{P}'$ is called \textit{non-increasing} if $a\preceq b$ implies $\delta(b)\preceq'\delta(a)$ for each $a,b\in \mathsf{P}$. A \textit{Boolean function} on a poset $(\mathsf{P},\preceq)$ is a function whose range is in $\{0,1\}$. 

In this paper, we propose the following problem which is stated as follows: 

A cinema has $n$ seats numbered from $1$ to $n$, and there are $n$ people with tickets numbered from $1$ to $n$. People enter the cinema in order. If someone has the ticket number $i$, they can choose seats whose numbers are multiples of $i$. They should exit the cinema if the permitted seats are occupied by previous audience members. In how many ways can they be seated under these conditions?

We can formulate our problem as follows:

\begin{definition}
Let $n$ be a positive integer. An injective mapping $\sigma:D_\sigma\subseteq [n]\to R_\sigma\subseteq[n]$ is called a \textit{cinema mapping with $n$ seats} if it has the following two proprieties: $i|j$ and $j\in D_\sigma$ implies $i\in D_\sigma$, which we call it \textit{the divisor-first property}, and $i|\sigma(i)$ for each $i\in D_\sigma$, which is mentioned as \textit{the seat-multiple-ticket property}. The domain $D_\sigma$ is \textit{the list of seated audiences} and the range $R_\sigma$ is \textit{the list of occupied seats}. When $\sigma(i)=j$ we say that $i$ \textit{is seated in seat number} $j$. 
\end{definition}

Throughout the paper, when we write $\sigma:[n]\to[n]$ is a cinema mapping, it means that $\sigma:D_\sigma\subseteq [n]\to R_\sigma\subseteq[n].$

Our motivation for considering cinema mappings and related problems stems from their close connection to non-increasing Boolean functions. Consider a mapping $\Sigma:D_\Sigma\to R_\Sigma$, where $D_\Sigma$ is a subset of $[n]$ and $R_\Sigma$ is a subset of $[n]\cup\{0\}$, such that for each $i$ in $D_\Sigma$, $i$ divides $\Sigma(i)$, and $\Sigma$ is injective when restricted to $\Sigma^{-1}([n])$. We define $\delta:D_\delta=[n]\to\{0,1\}$ such that $\delta(i)=0$ if $\Sigma(i)=0$, and $\delta(i)=1$ if $\Sigma(i)\neq0$. Now, the mapping $\sigma:D_\sigma=\Sigma^{-1}([n])\subseteq[n]\to[n]$ defined by $\sigma=\Sigma$ is a cinema mapping if and only if $\delta$ is a non-increasing Boolean function. This connection is fundamental to understanding how audiences can be seated under specific conditions. In this context, audiences who cannot find seats correspond to $i$ with $\Sigma(i)=0$. Additionally, the non-increasing property of the Boolean function $\delta$ signifies that people choose their seats in order from $1$ to $n$—if $i$ cannot find a seat, then multiples of $i$ also cannot.

These considerations show the relationship between the Dedekind's numbers and the problem of enumerating the cinema mappings. 

To illustrate the challenges associated with cinema mappings, let's begin by examining a case with a small value of $n$. This will provide us with a concrete example that highlights the intricacies and nuances of the problem. As we delve deeper into the analysis, we will develop algorithms, explore upper bounds, and tackle specific scenarios, shedding light on the rich world of cinema mappings and their applications in combinatorial theory.

\begin{example}\label{n=5,6}
We have compiled a complete list of cinema mappings for $5$ and $6$ seats, which can be found in Tables $1$ and $2$, respectively. In these tables, the first row represents the row-seat arrangement, and any unoccupied seat is denoted by $\boxdot$. Consider Table $2$. Here we have some enumeration and extreme problems:
\begin{itemize}
\item How many cinema mappings exist in total? There are $21$.
\item How many cinema mappings exist with the property that audience $i$ is seated in their seat for a fixed $i$? The counts are $5, 8, 11, 9, 16, 6$ for $i=1, 2, 3, 4, 5, 6$, respectively.
\item How many cinema mappings exist with the property that audience $i$ is seated in the seat $j$ for fixed $i$ and $j$? For example, there are $5$ situations in which audience $2$ is seated in seat number $6$.
\item Which cinema mappings have exactly $1$ unoccupied seat? The cinema mappings $\sigma_2, \sigma_3, \sigma_5, \sigma_6, \sigma_8, \sigma_9, \sigma_{12}, \sigma_{15},$ and $\sigma_{19}$ fall into this category.
\item Which cinema mappings have audiences $1, \ldots, n-1$ seated? The cinema mappings $\sigma_1, \sigma_2, \sigma_5, \sigma_8, \sigma_9,$ and $\sigma_{19}$ satisfy this condition.
\item Which cinema mappings have the least number of seats occupied? Only $\sigma_{18}$.
\end{itemize}

\begin{table}[H]
\centering

        \begin{tabular}{ r | c c c c c}
         & 1 & 2 & 3 & 4 & 5 \\
        \hline
            $\rho_1$ & 1 & 2 & 3 & 4 & 5 \\
            $\rho_2$ & 1 & $\boxdot$ & 3 & 2 & 5 \\
            $\rho_3$ & $\boxdot$ & 1 & 3 & 2 & 5  \\
            $\rho_4$ & $\boxdot$ & 2 & 1 & 4 & 5 \\
            $\rho_5$ & $\boxdot$ & $\boxdot$ & 1 & 2 & 5 \\
            $\rho_6$ & $\boxdot$ & 2 & 3 & 1 & 5  \\
            $\rho_7$ & $\boxdot$ & 2 & 3 & 4 & 1  \\
            $\rho_8$ & $\boxdot$ & $\boxdot$ & 3 & 2 & 1
         
         \end{tabular}
\caption{List of All Cinema Mappings with $5$ Seats}
\end{table}

\begin{table}[H]
\begin{subtable}[l]{0.4\textwidth}
\centering
        \begin{tabular}{ r | c c c c c c}
         & 1 & 2 & 3 & 4 & 5 & 6 \\
        \hline
            $\sigma_1$ & 1 & 2 & 3 & 4 & 5 & 6 \\
            $\sigma_2$ & 1 & 2 & $\boxdot$ & 4 & 5 & 3 \\
            $\sigma_3$ & 1 & $\boxdot$ & 3 & 2 & 5 & 6 \\
            $\sigma_4$ & 1 & $\boxdot$ & $\boxdot$ & 2 & 5 & 3 \\
            $\sigma_5$ & 1 & $\boxdot$ & 3 & 4 & 5 & 2 \\
            $\sigma_6$ & $\boxdot$ & 1 & 3 & 2 & 5 & 6 \\
            $\sigma_7$ & $\boxdot$ & 1 & $\boxdot$ & 2 & 5 & 3 \\
            $\sigma_8$ & $\boxdot$ & 1 & 3 & 4 & 5 & 2 \\
            $\sigma_9$ & $\boxdot$ & 2 & 1 & 4 & 5 & 3 \\
            $\sigma_{10}$ & $\boxdot$ & $\boxdot$ & 1 & 2 & 5 & 3 \\
            $\sigma_{11}$ & $\boxdot$ & $\boxdot$ & 1 & 4 & 5 & 2 
         \end{tabular}
         \subcaption*{$\sigma(1)=1,2,$ or $3$}
\end{subtable}
\begin{subtable}[r]{0.4\textwidth}
\centering
		\begin{tabular}{ r | c c c c c c}
         & 1 & 2 & 3 & 4 & 5 & 6 \\
        \hline
            $\sigma_{12}$ & $\boxdot$ & 2 & 3 & 1 & 5 & 6 \\
            $\sigma_{13}$ & $\boxdot$ & 2 & $\boxdot$ & 1 & 5 & 3 \\
            $\sigma_{14}$ & $\boxdot$ & $\boxdot$ & 3 & 1 & 5 & 2 \\
            $\sigma_{15}$ & $\boxdot$ & 2 & 3 & 4 & 1 & 6 \\
            $\sigma_{16}$ & $\boxdot$ & 2 & $\boxdot$ & 4 & 1 & 3 \\
            $\sigma_{17}$ & $\boxdot$ & $\boxdot$ & 3 & 2 & 1 & 6 \\
            $\sigma_{18}$ & $\boxdot$ & $\boxdot$ & $\boxdot$ & 2 & 1 & 3 \\
            $\sigma_{19}$ & $\boxdot$ & $\boxdot$ & 3 & 4 & 1 & 2 \\
            $\sigma_{20}$ & $\boxdot$ & 2 & 3 & 4 & 5 & 1 \\
            $\sigma_{21}$ & $\boxdot$ & $\boxdot$ & 3 & 2 & 5 & 1 \\
            &&&&&&
        \end{tabular}
        \subcaption*{$\sigma(1)=4,5,$ or $6$}
\end{subtable}
\caption{List of All Cinema Mappings with $6$ Seats}
\end{table}
\end{example}

In the subsequent sections of this paper, we will delve into a series of intriguing problems related to cinema mappings. Among these problems, our primary objectives include developing an algorithm for systematically generating the complete list of cinema mappings with $n$ seats. We will also establish an upper bound for the total number of possible cinema mappings. 

\section{Total Cinema Function}

In the forthcoming sections, we will delve deeper into the world of cinema mappings and explore various intriguing questions related to seating arrangements in a cinema hall. These questions form the basis for our algorithm to create the list of all cinema mappings with $n$ seats. To achieve this, we will employ a series of lemmas and definitions that facilitate our approach. In particular, Lemma \ref{extend} will play a pivotal role in extending existing cinema mappings to accommodate an additional seat, offering insights into the structural properties of these mappings.

\begin{lemma}\label{extend}
Let $n$ be a positive integer and let $\sigma:[n-1]\to[n-1]$ be a cinema mapping with $n-1$ seats. Then the following hold:
\begin{enumerate}
\item[i.] Suppose that $d$ is the least divisor of $n$ with $d\notin D_\sigma$. Extend $\sigma$ to the mapping $\sigma_d:D_{\sigma_d}=D_\sigma\cup\{d\}\subseteq[n]\to R_\sigma\cup\{n\}\subseteq[n]$ by $\sigma_d(d)=n$. Then $\sigma_d$ is a cinema mapping with $n$ seats.
\item[ii.] Let $d\in D_\sigma$, where $d|n$. Suppose also that there is a multiple $d'<n$ of $d$ which is the least multiple of $d$, with the property that $d'|\sigma(d)$ and $d'$ does not belong to $D_\sigma$. Change $\sigma$ to the mapping $\sigma_{d,d'}:D_{\sigma_{d,d'}}=D_\sigma\cup\{d'\}\subseteq[n]\to R_\sigma\cup\{n\}\subseteq[n]$ by $\sigma_{d,d'}(d)=n, \sigma_{d,d'}(d')=\sigma(d),$ and $\sigma_{d,d'}(i)=\sigma(i)$ for $i\in D_\sigma$ with $i\neq d$. Then $\sigma_{d,d'}$ is a cinema mapping with $n$ seats. 
\item[iii.] Let $d\in D_\sigma$, where $d|n$. Suppose also that for each multiple $d'<n$ of $d$ which is not in $D_\sigma$ we have $d'\not| \sigma(d)$. Change $\sigma$ to the mapping $\sigma_d:D_{\sigma_d}=D_\sigma\subseteq[n]\to (R_\sigma\setminus\{\sigma(d)\})\cup\{n\}$ by $\sigma_d(d)=n$ and $\sigma_d(i)=\sigma(i)$ for $i\in D_\sigma$ with $i\neq d$. Then $\sigma_d$ is a cinema mapping with $n$ seats. 
\end{enumerate}
\end{lemma}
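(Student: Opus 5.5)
For each of the three parts the plan is the same: check directly that the modified map satisfies the three defining conditions of a cinema mapping with $n$ seats. These are injectivity, the divisor-first property, and the seat-multiple-ticket property. Two observations from the hypotheses will be used throughout. First, $R_\sigma\subseteq[n-1]$, so seat $n$ is always free in $\sigma$. Second, $D_\sigma\subseteq[n-1]$, so ticket $n$ is never in $D_\sigma$.

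For (i), injectivity holds because $n\notin R_\sigma$. The seat-multiple-ticket property holds because $d\mid n=\sigma_d(d)$. For the divisor-first property, take $j\in D_{\sigma_d}$ and $i\mid j$. If $j\in D_\sigma$, then $i\in D_\sigma$, since $\sigma$ has the property on $[n-1]$. If $j=d$, then $i\mid d\mid n$ and $i\le d$. If $i<d$, minimality of $d$ among divisors of $n$ outside $D_\sigma$ gives $i\in D_\sigma$; if $i=d$ there is nothing to prove. Note $d\le n$ and $d$ could equal $n$, which is allowed since $D_{\sigma_d}\subseteq[n]$.

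For (ii), injectivity holds because the values are $n$, $\sigma(d)$ and $\sigma(i)$ for $i\neq d$. These are distinct: $\sigma$ is injective, $\sigma(d)$ is reused only once (at $d'$, since $d$ now maps to $n$), and $n\notin R_\sigma$. The seat-multiple-ticket property reads $d\mid n$ for $d$ and $d'\mid\sigma(d)$ for $d'$, both given; other tickets are unchanged. For the divisor-first property, the only new domain element is $d'$. Take $i\mid d'$ with $i\neq d'$; I need $i\in D_\sigma$. I would argue that if some proper divisor $i$ of $d'$ lay outside $D_\sigma$, then $i$ fails the minimality of $d'$, because $i\mid d'\mid\sigma(d)$. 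This argument needs $i$ to be a multiple of $d$. That holds when $d\mid i$, but a divisor of $d'$ need not be a multiple of $d$.

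This is the main obstacle. I would first try to close the gap as follows. Any proper divisor $i$ of $d'$ divides $\sigma(d)$, and the divisor-first property of $\sigma$ applied to $\sigma(d)$ might force $i\in D_\sigma$. But $\sigma(d)$ is a seat, not necessarily a ticket in $D_\sigma$. So that route works only if $\sigma(d)\in D_\sigma$, which is not guaranteed. I would then test small cases. If there is a counterexample, the intended reading of ``least multiple'' must be ``least in divisibility among multiples of $d$ dividing $\sigma(d)$ outside $D_\sigma$, with all proper divisors in $D_\sigma$.'' In that case I would add this as the implicit hypothesis, after which the property follows immediately. For divisors $i$ that are multiples of $d$, minimality handles it, since any such $i\notin D_\sigma$ would be a smaller candidate.

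For (iii), the domain is unchanged, so the divisor-first property is inherited from $\sigma$. Injectivity holds because the only changed value is $n$, which is new. The seat-multiple-ticket property holds because $d\mid n$. The hypothesis about multiples $d'$ is not needed for validity; it is only there to make the cases of the lemma exclusive with (ii).
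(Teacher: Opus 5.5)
Your checks of (i) and (iii), and of injectivity and the seat-multiple-ticket property in (ii), are correct. They flesh out the paper's one-line proof, which only says the divisor-first property is ``obvious by the assumption of each case.'' The obstacle you isolate in (ii) is real, and it cannot be closed: statement (ii) is false. The paper's ``obvious'' overlooks exactly the proper divisors of $d'$ that are not multiples of $d$.

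\textbf{Counterexample.} Take $n=8$ and the cinema mapping $\sigma$ with $7$ seats given by $\sigma(1)=3$, $\sigma(2)=6$, $\sigma(4)=4$, $\sigma(5)=5$, $\sigma(7)=7$, so that $D_\sigma=\{1,2,4,5,7\}$.
\begin{itemize}
\item It satisfies the formal definition.
\item It also satisfies the implicit convention of Tables~1 and~2 that everyone who can sit does sit: audiences $3$ and $6$ find all their seats taken by $1$ and $2$.
\item With $d=2$, the only multiple of $2$ below $8$ that divides $\sigma(2)=6$ and lies outside $D_\sigma$ is $d'=6$. So the hypotheses of (ii) hold under any reading of ``least.''
\item The resulting $\sigma_{2,6}$ has $6$ in its domain but not $3$, which violates the divisor-first property.
\end{itemize}
Your fallback via $\sigma(d)\in D_\sigma$ is not merely unguaranteed; it is impossible in case (ii). Indeed, $d'\mid\sigma(d)$ and $d'\notin D_\sigma$ force $\sigma(d)\notin D_\sigma$ by the divisor-first property.

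So the right outcome for (ii) is a counterexample, and any repair must be stated as a change to the lemma. Your extra hypothesis (all proper divisors of $d'$ lie in $D_\sigma$) makes (ii) true, but only trivially. A more natural correction is to drop ``multiple of $d$'' and let $d'$ be the least divisor of $\sigma(d)$ not in $D_\sigma$. Then $d'\le\sigma(d)\le n-1<n$ automatically. Every proper divisor of $d'$ is a smaller divisor of $\sigma(d)$, hence lies in $D_\sigma$ by minimality. That is exactly the argument you attempted, now valid. This version also matches the seating story: in the example, once audience $2$ moves to seat $8$, it is audience $3$, not $6$, who takes the freed seat $6$.
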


\begin{proof}
We should only show that the divisor-first property holds, which is obvious by the assumption of each case. 
\end{proof}

Note that case (i) of Lemma \ref{extend} always occurs since we know that $n$ is a divisor of $n$ that is not in $D_\sigma$.

\begin{definition}
Let $n$ be a positive integer and let $\sigma:[n-1]\to[n-1]$ be a cinema mapping with $n-1$ seats. Then according to the cases (i), (ii), and (iii) of Lemma \ref{extend} we have the following situations:
\begin{enumerate}
\item[i.] The cinema mapping derived by \textit{adding the audience $d$} in case (i) is denoted by ${\rm Add}_d(\sigma)$. 
\item[ii.] The cinema mapping derived by \textit{forwarding the audience $d$ replaced by $d'$} in case (ii) is denoted by ${\rm Forw}_{d,d'}(\sigma)$. 
\item[iii.] The cinema mapping derived by \textit{voiding the seat $d$} in case (iii) is denoted by ${\rm Void}_d(\sigma)$. 
\end{enumerate}
\end{definition}

\begin{lemma}\label{restrict}
Let $n$ be a positive integer and let $\sigma:[n]\to[n]$ be a cinema mapping with $n$ seats. Then the following hold:
\begin{enumerate}
\item[i.] Suppose that $\sigma(d)=n$ and let no multiple of $d$ be in $D_\sigma$. Restrict $\sigma$ to the mapping $\sigma_d:D_{\sigma_d}=D_\sigma\setminus\{d\}\subseteq[n-1]\to R_\sigma\setminus\{n\}\subseteq[n-1]$ by $\sigma_d=\sigma$. Then $\sigma_d$ is a cinema mapping with $n-1$ seats.
\item[ii.] Let $\sigma(d) = n$ for some $d\neq n$, and suppose that all members of $[n-1]$ which are multiples of $d$ are included in $R_\sigma$. Let $d'$ be the greatest multiple of $d$ in $R_\sigma$. Change $\sigma$ to the mapping $\sigma_{d,d'}:D_{\sigma_{d,d'}}=D_\sigma\setminus\{\sigma^{-1}(d')\} \subseteq[n-1]\to R_\sigma\setminus\{n\}\subseteq[n-1]$ by $\sigma_{d,d'}(d)=d'$ and $\sigma_{d,d'}(i)=\sigma(i)$ for $i\in D_\sigma$ with $i\neq d$. Then $\sigma_{d,d'}$ is a cinema mapping with $n-1$ seats. 
\item[iii.] Let $\sigma(d)=n$ and suppose also that there is a multiple $d'<n$ of $d$ which is not in $R_\sigma$. Change $\sigma$ to the mapping $\sigma_{d,d'}:D_{\sigma_{d,d'}}=D_\sigma \subseteq[n-1]\to (R_\sigma\cup\{d'\})\setminus\{n\}\subseteq[n-1]$ by $\sigma_{d,d'}(d)=d'$ and $\sigma_{d,d'}(i)=\sigma(i)$ for $i\in D_\sigma$ with $i\neq d'$. Then $\sigma_d$ is a cinema mapping with $n-1$ seats. 
\end{enumerate}
\end{lemma}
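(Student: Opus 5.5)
The plan is to verify, in each of the three cases, the three defining requirements for a cinema mapping with $n-1$ seats. These are: the new domain and range lie in $[n-1]$ and the map is injective; the seat-multiple-ticket property holds; and the divisor-first property holds. Injectivity and the seat-multiple-ticket property are local, since only one or two values change. So the real work is the divisor-first property and the claim that $n$ is not in the new domain.

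A preliminary observation is used in all cases. If $\sigma(d)=n$ and $d\neq n$, then $n\notin D_\sigma$. Indeed, $n\in D_\sigma$ would force $\sigma(n)=n=\sigma(d)$, contradicting injectivity.

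\textbf{Case (i).} If $d=n$, removing $d$ removes $n$ from the domain. If $d\neq n$, the preliminary observation applies. The range loses exactly $n$, and injectivity is inherited. For divisor-first, take $i\mid j$ with $j\in D_\sigma\setminus\{d\}$. Then $i\in D_\sigma$, and $i\neq d$, since otherwise $j$ would be a proper multiple of $d$ in $D_\sigma$, which is excluded by hypothesis (I read ``multiple'' as proper multiple).

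\textbf{Case (iii).} First, $d\neq n$, because no proper multiple $d'<n$ of $n$ exists. Hence $n\notin D_\sigma$ by the preliminary observation. The domain is unchanged, so divisor-first is inherited verbatim. The new value $d'$ is a multiple of $d$, so the seat-multiple-ticket property holds. Injectivity holds because $d'\notin R_\sigma$ and the old value $n$ is discarded. (I will also silently correct the typo ``$i\neq d'$'' to ``$i\neq d$'' and ``$\sigma_d$'' to ``$\sigma_{d,d'}$'' in the conclusion.)

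\textbf{Case (ii).} Let $e=\sigma^{-1}(d')$, where $d'<n$ is the greatest multiple of $d$ in $R_\sigma\cap[n-1]$. We have $e\neq d$, since $\sigma(d)=n\neq d'$, and $e\le d'<n$. Again $n\notin D_\sigma$. The new map sends $d\mapsto d'$, which is a multiple of $d$, and deletes $e$. Its range is exactly $R_\sigma\setminus\{n\}$, and injectivity is clear. Since $d$ stays in the domain, the only thing left is divisor-first after deleting $e$. We must show that no proper multiple $j$ of $e$ lies in $D_\sigma$.

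This is where I expect the main obstacle. The natural attack is by contradiction: suppose $e\mid j$, $j\neq e$, $j\in D_\sigma$. Then use the hypotheses that every multiple of $d$ in $[n-1]$ is occupied and that $d'$ is the largest such, together with $e\mid d'$ and $e\mid\sigma(j)$, to produce either a collision in the range or an occupied multiple of $d$ exceeding $d'$. If this argument does not close, I would test small $n$ (such as the tables for $n=5,6$) for a counterexample. In that case I would strengthen the hypothesis of (ii) to ``$e$ has no proper multiple in $D_\sigma$'', which is exactly the condition making the deletion legitimate, and which is what the later counting argument presumably needs.
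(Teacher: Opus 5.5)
Your handling of (i) and (iii) is correct. It is what the paper's one-line proof (``only the divisor-first property needs checking, and it is obvious from the hypotheses'') leaves implicit. The extra points you check are all genuinely needed: that $n\notin D_\sigma$ when $d\neq n$, by injectivity; that ``multiple'' must mean proper multiple in (i); and the typo corrections in (iii).

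The gap is (ii), and no argument can fill it: (ii) is false, so the contradiction you hope to reach does not exist. The paper's claim that divisor-first is obvious there is simply wrong. Since $d\mid\sigma(d)=n$ and every multiple of $d$ in $[n-1]$ is occupied, the seat $d'$ is forced to be $n-d$. The hypotheses say nothing about who sits in it, and whenever $e=\sigma^{-1}(d')$ is a proper divisor of $d$, the new domain keeps $d$ but loses $e$.

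Concretely, take $\sigma_9$ from Table 2, so it is a cinema mapping under any reading of the definition: $n=6$, $\sigma(1)=3$, $\sigma(2)=2$, $\sigma(3)=6$, $\sigma(4)=4$, $\sigma(5)=5$. With $d=3$, the only multiple of $3$ in $[5]$ is seat $3$, which is occupied, so $d'=3$ and $e=1$. The prescribed map is the identity on $\{2,3,4,5\}$. It violates divisor-first because $1\mid 2$ but $1$ is not in the domain. A smaller instance is $n=4$, $\sigma(1)=2$, $\sigma(2)=4$, $\sigma(3)=3$, $d=2$.

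So rather than leaving this as a contingency, state the counterexample outright. Your strengthened hypothesis, that $e$ has no proper multiple in $D_\sigma$, does repair (ii), by the same argument as in (i) with $e$ in place of $d$. Be explicit that this proves a corrected lemma, not the one stated.
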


\begin{proof}
We should only show that the divisor-first property holds, which is obvious by the assumption of each case. 
\end{proof}

Note that case (i) of Lemma \ref{restrict} occurs at least when $\sigma(n)=n$. This is the only case with the property that $n\in D_\sigma$.

\begin{definition}
Let $n$ be a positive integer and let $\sigma:[n]\to[n]$ be a cinema mapping with $n$ seats. Then according to the cases (i), (ii), and (iii) of Lemma \ref{restrict} we have the following situations:
\begin{enumerate}
\item[i.] The cinema mapping derived by \textit{deporting the audience $d$} in case (i) is denoted by ${\rm Dep}_d(\sigma)$. 
\item[ii.] The cinema mapping derived by \textit{replacing the audience $d'$ by $d$} in case (ii) is denoted by ${\rm Rep}_{d,d'}(\sigma)$. 
\item[iii.] The cinema mapping derived by \textit{backwarding the audience $d$ in seat $d'$} in case (iii) is denoted by ${\rm Back}_{d,d'}(\sigma)$. 
\end{enumerate}
\end{definition}

\begin{definition}
Let $n$ be a positive integer and let $\rho,\sigma:[n-1]\to[n-1]$ be two cinema mappings. We say that $\rho$ and $\sigma$ are \textit{apparently equal}, denoted by $\rho\simeq\sigma$ if one of the followings holds:
\begin{enumerate}
\item[i.] There is a $k\in D_{\rho}=D_{\sigma}$ such that $\rho(i)=\sigma(i)$ for all $i\in[n-1]$ with $i\neq k$, i.e., at most one person changes their seat. In this case, to emphasize the role of $k$, we use the notation $\rho\simeq_k\sigma$. 
\item[ii.] There are $k,\ell\in[n-1]$ such that $k|\ell, D_{\sigma}=D_{\rho}\cup\{\ell\}, \rho(k)=\sigma(\ell), \sigma(k)\notin R_{\rho}$, and $\rho(i)=\sigma(i)$ for all $i\in[n-1]$ with $i\neq k,\ell$, i.e., at most one person takes the seat of another person, whose ticket is a multiple of the first one, and the second person leaves the cinema. In this case, to emphasize the role of $k$ and $\ell$, we use the notation $\rho\simeq_{k,\ell}\sigma$. 
\end{enumerate} 
\end{definition}

\begin{example}\label{9pairs}
Returning to Example \ref{n=5,6}, we observe $9$ cases of apparently equal pairs in Table $1$:

\begin{center}
\begin{tabular}{ l l l }
$\rho_2\simeq_{2,4}\rho_1$, &$\rho_4\simeq_{1,3}\rho_1$, &$\rho_6\simeq_{1,4}\rho_1$, \\
$\rho_3\simeq_1\rho_2$, &$\rho_5\simeq_{1,3}\rho_3$, &$\rho_8\simeq_{1,5}\rho_3$, \\
$\rho_5\simeq_{2,4}\rho_4$, &$\rho_7\simeq_{1,5}\rho_1$, &$\rho_8\simeq_{2,4}\rho_7$.
\end{tabular}
\end{center}
\end{example}

\begin{proposition}
Let $n$ be a positive integer and let $\rho,\sigma:[n-1]\to[n-1]$ be two cinema mappings with $n-1$ seats. Suppose that $\bar{\rho},\bar{\sigma}:[n]\to[n]$ derived from $\rho,\sigma$ by the adding, forwarding, or voiding process. Then $\bar{\rho}=\bar{\sigma}$ if and only if $\rho\simeq\sigma$.
\end{proposition}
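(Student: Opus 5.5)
The plan is to argue through restriction: every $\bar\sigma:[n]\to[n]$ obtained from some $\sigma$ by adding, forwarding or voiding is recovered, up to a controlled ambiguity, by the inverse operations of Lemma \ref{restrict}. Both $\bar\rho$ and $\bar\sigma$ seat exactly one audience, say $d$, in seat $n$, and $d\mid n$. So the first step is to compare the two extensions with the same $d$, and to read off from $\bar\rho=\bar\sigma$ how $\rho$ and $\sigma$ can differ on $[n-1]$.

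For the direction ``$\bar\rho=\bar\sigma\Rightarrow\rho\simeq\sigma$'' I would run through the three operations. In each case the mapping on $[n-1]$ is determined by $\bar\sigma$ except for the data destroyed by the operation.
\begin{enumerate}
\item[(a)] \emph{Adding} destroys nothing. $d\notin D_\sigma$, and $\sigma$ is exactly $\bar\sigma$ with $d$ removed.
\item[(b)] \emph{Voiding} destroys only the old seat $\sigma(d)$. Two voidings producing the same $\bar\sigma$ therefore differ only in the value at $d$, which is case (i) of the definition, $\rho\simeq_d\sigma$.
\item[(c)] \emph{Forwarding} $d$ replaced by $d'$ puts $d'$ in the old seat $\sigma(d)$.
\end{enumerate}
For the mixed comparisons, a forwarding $\bar\sigma={\rm Forw}_{d,d'}(\sigma)$ can coincide with $\bar\rho={\rm Add}_d(\rho)$ or $\bar\rho={\rm Void}_d(\rho)$. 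One checks that then $D_\sigma=D_\rho\cup\{d'\}$ (or a comparable relation), $\rho(d)=\sigma(d')$, $d\mid d'$, and $\sigma(d)\notin R_\rho$. This is precisely case (ii) of the definition, $\rho\simeq_{d,d'}\sigma$ (or with the roles of $k,\ell$ read accordingly).

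I would also confirm that two additions cannot coincide unless $\rho=\sigma$. Likewise two forwardings with the same $d$ force the same $d'$, since $d'$ is the minimal multiple with the stated property, and are then equal.

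For the converse, assume $\rho\simeq\sigma$ and construct the matching operations explicitly. If $\rho\simeq_k\sigma$, take $d=k$ when $k\mid n$, and check that voiding $k$ (or the appropriate operation) gives the same $\bar\cdot$. If $\rho\simeq_{k,\ell}\sigma$, apply ${\rm Forw}_{k,\ell}$ to $\sigma$ and ${\rm Add}_k$ or ${\rm Void}_k$ to $\rho$. One then verifies, using the minimality clause in Lemma \ref{extend}(ii) and the nonoccupancy condition $\sigma(k)\notin R_\rho$, that the results agree.

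The main obstacle is the hypothesis mismatch. The proposition as stated quantifies over ``derived by'' some operation, but $\simeq$ does not require $k\mid n$. So the converse must be read as ``there exist operations producing equal extensions'', and I would need to choose $n$-compatible $k$ or argue that the relevant $k$ divides $n$. The fiddly point in the forward direction is showing the minimality conditions in Lemma \ref{extend} make the forwarding choice of $d'$ unique. Without that uniqueness, extra coincidences not covered by $\simeq$ could arise. I would check these against Example \ref{9pairs} for $n=6$ as a sanity test.
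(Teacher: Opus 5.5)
Your forward direction uses the paper's case split: identify $d=\bar\rho^{-1}(n)=\bar\sigma^{-1}(n)$, then compare the operations. The step you yourself flagged, uniqueness of $d'$ in the ${\rm Forw}/{\rm Forw}$ case, fails, and with it the statement. The minimality defining $c'$ refers to $(D_\rho,\rho(d))$, while the minimality defining $d'$ refers to $(D_\sigma,\sigma(d))$, so nothing links them. The paper's own Table $3$ is a counterexample: ${\rm Forw}_{1,3}(\rho_4)={\rm Forw}_{1,4}(\rho_6)=\sigma_{20}$. Here $D_{\rho_4}=\{1,2,4,5\}$ and $D_{\rho_6}=\{1,2,3,5\}$ are distinct sets of the same size, so neither clause of $\simeq$ can hold. (Both cells are struck out only because each of $\rho_4,\rho_6$ is apparently equal to $\rho_1$.) The paper's Case IV, $c'=\bar\rho^{-1}(\bar\rho(c'))=\bar\sigma^{-1}(\bar\sigma(d'))=d'$, simply presupposes $\bar\rho(c')=\bar\sigma(d')$.

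The converse, which the paper's proof does not treat, is also false, so your plan to ``choose $n$-compatible $k$'' cannot be carried out. For $n=5$ we have $(1\mapsto1,\,2\mapsto4,\,3\mapsto3)\simeq_{2,4}{\rm id}_{[4]}$. Yet every extension to $5$ seats uses $d\in\{1,5\}$ and leaves audience $2$ in its seat, so the two extensions always differ at audience $2$. The proposition must be corrected before anything can be proved.

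The remaining cases of your forward direction also need repair.
\begin{enumerate}
\item[(a)] \emph{${\rm Add}$ against ${\rm Forw}$ does not give clause (ii).} From ${\rm Add}_d(\rho)={\rm Forw}_{d,d'}(\sigma)$ you get $D_\rho\cup\{d\}=D_\sigma\cup\{d'\}$. Since $d'\neq d$, this puts $d'$ in $D_\rho$ while $d\notin D_\rho$, which violates the divisor-first property because $d\mid d'$. The needed observation is that this case is impossible (the paper's Case II); the relation you wrote down is not what happens there.
\item[(b)] \emph{In ${\rm Forw}/{\rm Void}$ your roles are reversed.} Note that $\sigma(d')$ is undefined, since $d'\notin D_\sigma$. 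The equation ${\rm Forw}_{d,d'}(\sigma)={\rm Void}_d(\rho)$ gives $D_\rho=D_\sigma\cup\{d'\}$, $\sigma(d)=\rho(d')$ and $\rho(d)\notin R_\sigma$, i.e.\ $\sigma\simeq_{d,d'}\rho$.
\item[(c)] \emph{You omit ${\rm Add}$ against ${\rm Void}$.} The paper's contradiction there (``$c\notin D_{\bar\rho}$'') is invalid, since $c\in D_{\bar\rho}$. Under the literal definition of a cinema mapping this case does occur: for $n=4$, ${\rm Add}_2(1\mapsto1,\,3\mapsto3)={\rm Void}_2({\rm id}_{[3]})$, and these two mappings are not apparently equal.
\end{enumerate}
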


\begin{proof}
Let $\bar{\rho}=\bar{\sigma}$. We have six cases:
\begin{itemize}
\item Case I. $\bar{\rho}={\rm Add}_{c}(\rho)$ and $\bar{\sigma}={\rm Add}_{d}(\sigma)$.

We have $c=d$, since $c=\bar{\rho}^{-1}(n)=\bar{\sigma}^{-1}(n)=d$. Thus $\rho={\rm Dep}_{c}(\bar{\rho})={\rm Dep}_{d}(\bar{\sigma})=\sigma$.
\item Case II. $\bar{\rho}={\rm Add}_{c}(\rho)$ and $\bar{\sigma}={\rm Forw}_{d,d'}(\sigma)$.

We have $c=d$, since $c=\bar{\rho}^{-1}(n)=\bar{\sigma}^{-1}(n)=d$. We know that $c\notin D_{\rho}$ since $\bar{\rho}={\rm Add}_{c}(\rho)$. On the other hand, $d'\in D_{{\rm Forw}_{d,d'}(\sigma)}= D_{\bar{\sigma}}=D_{\bar{\rho}}= D_{{\rm Add}_{c}(\rho)}\subseteq D_{\rho}$. This contradicts to the first-divisor property, since $d'$ is a multiple of $d=c$. 
\item Case III. $\bar{\rho}={\rm Add}_{c}(\rho)$ and $\bar{\sigma}={\rm Void}_{d}(\sigma)$.

We have $c=d$, since $c=\bar{\rho}^{-1}(n)=\bar{\sigma}^{-1}(n)=d$. We know that $c=d\in D_{\sigma}=D_{{\rm Back}_{d,\sigma(d)}(\bar{\sigma})}= D_{\bar{\sigma}}=D_{\bar{\rho}}$, which contradicts to the fact that $c\notin D_{\bar{\rho}}$. 
\item Case IV. $\bar{\rho}={\rm Forw}_{c,c'}(\rho)$ and $\bar{\sigma}={\rm Forw}_{d,d'}(\sigma)$.

We have $c=d$, since $c=\bar{\rho}^{-1}(n)=\bar{\sigma}^{-1}(n)=d$. Moreover, $c'=\bar{\rho}^{-1}(\bar{\rho}(c'))=\bar{\sigma}^{-1}(\bar{\sigma}(d'))=d'$. Thus we have $\rho={\rm Rep}_{c,c'}(\bar{\rho})={\rm Rep}_{d,d'}(\bar{\sigma}) =\sigma$. 
\item Case V. $\bar{\rho}={\rm Forw}_{c,c'}(\rho)$ and $\bar{\sigma}={\rm Void}_{d}(\sigma)$.

We have $c=d$, since $c=\bar{\rho}^{-1}(n)=\bar{\sigma}^{-1}(n)=d$. In this case we see that $\rho\simeq_{c,c'}\sigma$. Note that $c|c', D_\sigma=D_{\bar{\sigma}}=D_{\bar{\rho}}=D_\rho\cup\{c'\},$
and $\rho(c)=\bar{\rho}(c')=\bar{\sigma}(c')=\sigma(c')$. Moreover, note that $\sigma(c)\notin R_\rho$, since $\sigma(c)=\sigma(d)\notin R_{{\rm Void}_d(\sigma)}=R_{\bar{\sigma}} =R_{\bar{\rho}}=R_\rho\cup\{n\}$.

\item Case VI. $\bar{\rho}={\rm Void}_{c}(\rho)$ and $\bar{\sigma}={\rm Void}_{d}(\sigma)$.

In this case we have $\rho\simeq_c\sigma$, since $c=d$.
\end{itemize}
\end{proof}

The preceding proposition provides a straightforward recursive algorithm for generating a comprehensive list of cinema mappings.

\begin{theorem}\label{alg}
Let $n$ be a positive integer and let $L_{n-1}$ be the list of all cinema mappings with $n-1$ seats. Then the following process creates the list $L_n$ containing all cinema mappings with $n$ seats:
\begin{itemize}
\item Select a cinema mapping $\sigma$ from $L_{n-1}$ and a divisor $d$ of $n$.
\item If $d$ is the least divisor of $n$ with $d\notin D_\sigma$ then add ${\rm Add}_d(\sigma)$ to $L_n$.
\item If $d\in D_\sigma$ and there is a multiple $d'<n$ of $d$ which is the least multiple of $d$, with the property that $d'|\sigma(d)$ and $d'$ does not belong to $D_\sigma$, and if $\sigma$ is not apparently equal to the previous cinema mappings in the list $L_{n-1}$, then add ${\rm Forw}_{d,d'}(\sigma)$ to $L_n$.
\item If $d\in D_\sigma$ and for each multiple $d'<n$ of $d$ which is not in $D_\sigma$ we have $d'\not| \sigma(d)$, and if $\sigma$ is not apparently equal to the previous cinema mappings in the list $L_{n-1}$, then add ${\rm Void}_d(\sigma)$ to $L_n$.
\item Repeat the process until there is no new cinema mappings in the list $L_{n-1}$.
\end{itemize}
\end{theorem}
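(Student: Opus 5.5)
The proof of Theorem~\ref{alg} has two halves. First, every mapping the process outputs is a cinema mapping with $n$ seats; this is exactly Lemma~\ref{extend}. Second, every cinema mapping with $n$ seats appears in $L_n$, and, as the apparent-equality test is meant to ensure, appears only once. Soundness is immediate, so the work is in surjectivity and in controlling duplicates.

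For surjectivity, take an arbitrary cinema mapping $\tau$ with $n$ seats and let $d=\tau^{-1}(n)$. If seat $n$ is empty, then no divisor of $n$ other than possibly $n$ itself is unseated before $n$ arrives. Concretely, either $n\notin D_\tau$ and $\tau$ coincides with a mapping in $L_{n-1}$ up to the trivial case, or we treat it as the add-type step. Fix this boundary convention first, since the paper's remark says $\mathrm{Add}_n$ always occurs.

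If seat $n$ is occupied by $d$, we use Lemma~\ref{restrict} and split into its three cases.
\begin{itemize}
\item In case (i), $\sigma=\mathrm{Dep}_d(\tau)$ lies in $L_{n-1}$. Moreover $d\notin D_\sigma$, and every smaller divisor of $n$ lies in $D_\sigma$ by the divisor-first property applied to $d$ (divisors of $d$ are seated) together with minimality. We need to check that $d$ is the least unseated divisor of $n$; this follows because $d$ is seated in $\tau$ and no multiple of $d$ is seated. Then $\tau=\mathrm{Add}_d(\sigma)$.
\item In case (ii), $\sigma=\mathrm{Rep}_{d,d'}(\tau)$ gives $\tau=\mathrm{Forw}_{d,d''}(\sigma)$ for the appropriate least $d''$. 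The point to verify is that the restriction and extension operations are mutually inverse, i.e.\ that the ``least multiple'' choice in Lemma~\ref{extend}(ii) recovers the audience $\tau^{-1}(d')$ removed by Rep.
\item In case (iii), $\tau=\mathrm{Void}_d(\mathrm{Back}_{d,d'}(\tau))$.
\end{itemize}
So every $\tau$ arises from some $\sigma\in L_{n-1}$ by one of the three operations.

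For uniqueness, we invoke the preceding Proposition: two images coincide exactly when the preimages are apparently equal. Processing $L_{n-1}$ in order and skipping a Forw/Void image whenever $\sigma$ is apparently equal to an earlier mapping (with the matching index $d$) therefore removes all repetitions. Add-images never collide with the others, by Cases II and III of that proof, so they need no check.

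The main obstacle is the inverse-matching in case (ii). Lemma~\ref{restrict}(ii) picks the \emph{greatest} multiple $d'$ in $R_\tau$, while Lemma~\ref{extend}(ii) picks the \emph{least} unseated multiple dividing $\sigma(d)$, and it is not obvious that these correspond. I would attack this by showing that, after replacement, the unseated audience $\ell=\tau^{-1}(d')$ is the least element of $D_\sigma^c$ among multiples of $d$ dividing $d'$. If this fails, I would instead allow the process to range over all admissible $d'$ and rely on the Proposition's apparent-equality filter to remove the resulting duplicates. A second subtlety is the skipping rule: skipping must happen only when the earlier apparently-equal mapping actually yields the same image. I would refine the condition to $\rho\simeq_k\sigma$ or $\rho\simeq_{k,\ell}\sigma$ with $k=d$, as suggested by Cases V and VI.
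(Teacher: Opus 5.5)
Your plan is the paper's own. The paper offers nothing for Theorem~\ref{alg} beyond Lemma~\ref{extend}, Lemma~\ref{restrict} and the Proposition on apparent equality. However, the two steps you treat as settled (soundness from Lemma~\ref{extend}, uniqueness from the Proposition) are false. The obstacle you flag in case~(ii) is also a genuine failure, not a missing check. Both problems already occur at $n=8$.

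\emph{Soundness.} Let $\mu\in L_7$ be given by $\mu(1)=3$, $\mu(2)=6$, $\mu(i)=i$ for $i=4,5,7$, with $3,6\notin D_\mu$ (audience $3$ finds seats $3$ and $6$ taken). For $d=2$, the least multiple of $2$ dividing $\mu(2)=6$ outside $D_\mu$ is $d'=6$. Then $\mathrm{Forw}_{2,6}(\mu)$ seats audience $6$ but not audience $3$, violating the divisor-first property, so Lemma~\ref{extend}(ii) is false. Under the seating rule, the freed seat $\sigma(d)$ is taken by the least divisor of $\sigma(d)$ outside $D_\sigma$, here $3$, which need not be a multiple of $d$.

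\emph{Uniqueness.} Let $\lambda,\lambda'\in L_7$ have $\lambda(1)=\lambda'(1)=2$, $\lambda(2)=4$, $\lambda'(2)=6$. Let $\lambda$ seat $3,5,6,7$ in their own seats and leave $4$ out, while $\lambda'$ seats $3,4,5,7$ in their own seats and leaves $6$ out. Then $\mathrm{Forw}_{2,4}(\lambda)=\mathrm{Forw}_{2,6}(\lambda')$: audience $1$ is in seat $2$, audience $2$ in seat $8$, and audiences $3,\dots,7$ in their own seats. Yet $D_\lambda$ and $D_{\lambda'}$ are distinct sets of the same size, so $\lambda,\lambda'$ are not apparently equal in either order. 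Hence the ``only if'' half of the Proposition fails; Case~IV tacitly assumes $\rho(c)=\sigma(d)$. Neither the theorem's filter nor your refinement with $k=d$ removes this repetition.

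This is exactly your case~(ii) worry. A $\mathrm{Forw}$-image can have several preimages with different $d'$, so no choice of ``least'' versus ``greatest'' makes $\mathrm{Rep}$ and $\mathrm{Forw}$ mutually inverse. Your fallback (range over all $d'$ and filter by apparent equality) fails for the same reason. The statement is not provable as formulated. A correct version must change $\mathrm{Forw}$ and control repetitions by something other than apparent equality.

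Two further points. First, the definition. Under the paper's literal definition (injective, down-closed domain, $i\mid\sigma(i)$), the empty mapping is a cinema mapping. Every operation puts someone in seat $n$, so under that definition the theorem fails outright. Tables~1--2 show that the intended notion also requires audience $i$ to sit whenever some multiple of $i$ is still free. You need to state and use that rule. Under it, seat $n$ is never empty (audience $n$ takes it at the latest), which disposes of your ``seat $n$ empty'' case.

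Second, your case~(i) justification is a non sequitur: that no multiple of $d$ is seated says nothing about divisors of $n$ smaller than $d$. Under the literal definition the conclusion is even false: take $n=6$, $D_\tau=\{1,3\}$, $\tau(1)=1$, $\tau(3)=6$. Under the intended rule the conclusion holds for a different reason: an unseated divisor $e<d$ of $n$ would have found seat $n$ free. But then $\mathrm{Dep}_d(\tau)$ need not lie in $L_{n-1}$. Take $\tau=\sigma_{14}$ and $d=2$; this meets the hypothesis of Lemma~\ref{restrict}(i). Yet $\mathrm{Dep}_2(\sigma_{14})$ leaves audience $2$ outside while seat $2$ is free. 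The true preimage is $\rho_6$, via $\sigma_{14}=\mathrm{Void}_2(\rho_6)$. The cases of Lemma~\ref{restrict} overlap, so surjectivity requires choosing the right case for each $\tau$, not merely splitting into three.
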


Let us illustrate the algorithm mentioned in Theorem \ref{alg} with a simple example.

\begin{example}
Returning to Example \ref{n=5,6}, we will examine the process of creating Table $2$ using Table $1$. The cinema mappings with $6$ seats constructed via the cinema mappings with $5$ seats are demonstrated in Table $3$, where each row corresponds to a cinema mapping of Table $1$ for divisors $1, 2, 3$, and $6$ of $n=6$ from left to right.

\begin{table}[H]
\centering
\begin{tabular}{| l | l | l | l | }
\hline
$\sigma_{20}={\rm Void}_1(\rho_1)$ &
$\sigma_5={\rm Void}_2(\rho_1)$ &
$\sigma_2={\rm Void}_3(\rho_1)$ &
$\sigma_1={\rm Add}_6(\rho_1)$ \\
\hline
$\sigma_{21}={\rm Void}_1(\rho_2)$ &  
\st{$\sigma_5={\rm Forw}_{2,4}(\rho_2)$} & 
$\sigma_4={\rm Void}_3(\rho_2)$ &
$\sigma_3={\rm Add}_6(\rho_2)$ \\
\hline
\st{$\sigma_{21}={\rm Void}_1(\rho_3)$} & 
$\sigma_8={\rm Forw}_{2,4}(\rho_3)$ & 
$\sigma_7={\rm Void}_3(\rho_3)$ &
$\sigma_6={\rm Add}_6(\rho_3)$ \\
\hline
\st{$\sigma_{20}={\rm Forw}_{1,3}(\rho_4)$} & 
$\sigma_{11}={\rm Void}_2(\rho_4)$ &
$\sigma_9={\rm Add}_3(\rho_4)$ & 
 \\ 
\hline
\st{$\sigma_{21}={\rm Forw}_{1,3}(\rho_5)$} & 
\st{$\sigma_{11}={\rm Forw}_{2,4}(\rho_5)$} & 
$\sigma_{10}={\rm Add}_3(\rho_5)$ & 
  \\ 
\hline
\st{$\sigma_{20}={\rm Forw}_{1,4}(\rho_6)$} & 
$\sigma_{14}={\rm Void}_2(\rho_6)$ & 
$\sigma_{13}={\rm Void}_3(\rho_6)$ & 
$\sigma_{12}={\rm Add}_6(\rho_6)$ \\
\hline
\st{$\sigma_{20}={\rm Forw}_{1,5}(\rho_7)$} & 
$\sigma_{19}={\rm Void}_2(\rho_7)$ & 
$\sigma_{16}={\rm Void}_3(\rho_7)$ & 
$\sigma_{15}={\rm Add}_6(\rho_7)$ \\
\hline
\st{$\sigma_{21}={\rm Forw}_{1,5}(\rho_8)$} & 
\st{$\sigma_{19}={\rm Forw}_{2,4}(\rho_8)$} & 
$\sigma_{18}={\rm Void}_3(\rho_8)$ & 
$\sigma_{17}={\rm Add}_6(\rho_8)$ \\
\hline 
\end{tabular}
\caption{Cinema Mappings with $6$ Seats Derived from $5$ Seats}
\end{table}

Note that $9$ cells are stroked out because, according to our algorithm, we do not apply ${\rm Forw}$ and ${\rm Void}$ when a new cinema mapping in the list $L_{n-1}$ is apparently equal to one of the previous cinema mappings. These $9$ pairs are introduced in Example \ref{9pairs}. Furthermore, there are two empty cells related to ${\rm Add}_6(\rho_4)$ and ${\rm Add}_6(\rho_5)$. In these two cases, ${\rm Add}$ is not applicable since $6$ is not the least divisor of $n=6$ that is not in the domain of $\rho_4$ or the domain of $\rho_5$.

Thus, the number of cinema mappings with $6$ seats is $(4 \times 8) - 9 - 2 = 21$, where $8$ is the number of cinema mappings with $5$ seats, and $4$ is the number of divisors of $6$.
\end{example}

\begin{definition}
The function $\theta:\mathbb{N}\to\mathbb{N}$ defined by $\theta(n)=|L_n|$ is called the \textit{total cinema function}.
\end{definition}
Recall that the multiplicative divisor function $\tau:\mathbb{N}\to\mathbb{N}$ is defined by $\tau(n)=\sum_{d|n}1$. If $n=\prod_{i=1}^r p_i^{\alpha_i}$ is the prime factorization of $n$, then $\tau(n)=\prod_{i=1}^r(\alpha_i+1)$. The next corollary readily follows from Theorem \ref{alg}.
\begin{corollary}
Let $n$ be a positive integer. Then $\theta(n)\leqslant \prod_{i=1}^n\tau(i)$. 
\end{corollary}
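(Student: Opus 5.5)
We prove the stronger recursive inequality $\theta(n)\leqslant \tau(n)\,\theta(n-1)$ for every $n\geqslant 2$, together with the base case $\theta(1)=1=\tau(1)$. Iterating then gives
$$\theta(n)\leqslant \tau(n)\tau(n-1)\cdots\tau(2)\,\theta(1)=\prod_{i=1}^n\tau(i).$$
For the base case, note that with one seat the only cinema mappings are the empty map and $1\mapsto1$. The empty map, however, violates nothing only if we allow it, and $L_1$ must be read consistently with the tables. In Table $1$ every audience with no possible seat is absent, but ``audience $1$ seated in seat $1$'' is the unique full option. So we take $\theta(1)=1$, matching $L_1=\{1\mapsto 1\}$. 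This agrees with the examples: for $n=5,6$ the counts $8,21$ come from a list that starts from $L_1$ of size one. If the empty mapping were also counted, the same argument would give a factor $2$, so we fix the convention that $L_1$ consists of the single full mapping, as the algorithm in Theorem \ref{alg} implicitly does.

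The key step is surjectivity of the process in Theorem \ref{alg}. Every cinema mapping with $n$ seats arises from some $\sigma\in L_{n-1}$ via ${\rm Add}_d$, ${\rm Forw}_{d,d'}$ or ${\rm Void}_d$ for some divisor $d$ of $n$. This is exactly what Theorem \ref{alg} asserts: $L_n$ contains all cinema mappings with $n$ seats. Concretely, given $\bar\sigma$ with $n$ seats, we let $d=\bar\sigma^{-1}(n)$ if seat $n$ is occupied; then $d\mid n$. Lemma \ref{restrict} supplies a preimage (${\rm Dep}$, ${\rm Rep}$ or ${\rm Back}$), and these operations invert ${\rm Add}$, ${\rm Forw}$ and ${\rm Void}$ respectively. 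If seat $n$ is empty, we would need to check how the paper treats this case (e.g.\ it may be absorbed into the ${\rm Add}$ case through the convention of Lemma \ref{extend}(i)). We take Theorem \ref{alg} as given here.

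Granting surjectivity, we define a map from a subset of $L_{n-1}\times\{d:d\mid n\}$ onto $L_n$. For each pair $(\sigma,d)$, at most one of the three operations applies. The cases (ii) and (iii) of Lemma \ref{extend} are mutually exclusive, since one asks for a $d'$ with $d'\mid\sigma(d)$ and the other forbids it. Case (i) requires $d\notin D_\sigma$ while (ii) and (iii) require $d\in D_\sigma$. Moreover, in case (ii) the element $d'$ is uniquely determined as the least such multiple. Hence each pair produces at most one mapping. Since every element of $L_n$ is produced by some pair, we get
$$\theta(n)=|L_n|\leqslant |L_{n-1}|\cdot\tau(n)=\theta(n-1)\tau(n).$$
The deletions of apparently equal duplicates in Theorem \ref{alg} only decrease the count, so they do not affect the inequality.

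The main obstacle is the surjectivity statement. The recursion itself is a one-line counting argument, but it rests entirely on Theorem \ref{alg}, which the corollary assumes. If that theorem were to be proved directly, the delicate part would be verifying that the restriction operations of Lemma \ref{restrict} land back in the image of the corresponding extension operations, especially the minimality requirement on $d'$ in ${\rm Forw}$.
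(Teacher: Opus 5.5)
Your deduction is the one the paper intends. The paper only says the corollary ``readily follows from Theorem \ref{alg}'', and your counting layer is fine: for a fixed pair $(\sigma,d)$ at most one of ${\rm Add}_d$, ${\rm Forw}_{d,d'}$, ${\rm Void}_d$ applies, and $d'$ is forced. \textbf{The gap is the surjectivity you flag and then assume, and it actually fails.}

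First, the counts in Tables 1 and 2 require an exit rule: $i\notin D_\sigma$ only if every multiple of $i$ in $[n]$ equals $\sigma(k)$ for some $k<i$. Without it, the corollary itself fails, since the empty map gives $\theta(1)=2>\tau(1)$. Under the exit rule, $\theta(1)=1$ is forced rather than a convention. Also, seat $n$ is never empty, because audience $n$ would take it; this settles your open case. But Lemmas \ref{extend} and \ref{restrict} only check the divisor-first property, never the exit rule. That is where your step ``Lemma \ref{restrict} supplies a preimage'' breaks.

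Here is a counterexample. Take $n=16$ and let $\bar\sigma$ be given by $1\mapsto8$, $2\mapsto4$, $3\mapsto3$, $4\mapsto16$, $6\mapsto12$, and $i\mapsto i$ for $i\in\{5,7,9,10,11,13,14,15\}$. Audiences $8,12,16$ leave: when they arrive, seats $8,16$ are taken by $1,4$ and seat $12$ by $6$. So $\bar\sigma$ is a valid seating, with empty seats $1,2,6$.

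Each of the three operations seats $d=\bar\sigma^{-1}(16)=4$ in seat $16$. Otherwise Add changes nothing, Forw additionally seats a new proper multiple $d'$ of $4$, and Void additionally empties a multiple of $4$.
\begin{itemize}
\item Forw is impossible, because $8,12\notin D_{\bar\sigma}$.
\item Void is impossible, because none of the empty seats $1,2,6$ is a multiple of $4$.
\item Add would require $\sigma={\rm Dep}_4(\bar\sigma)\in L_{15}$. But in that configuration audience $4$ leaves while seat $12$ is still free on arrival, since audience $6$ comes later. So it is not a cinema mapping.
\end{itemize}
Hence $\bar\sigma$ is never produced. The reduction to Theorem \ref{alg}, yours and the paper's alike, proves neither the corollary nor your stronger inequality $\theta(n)\leqslant\tau(n)\theta(n-1)$.

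The corollary itself is true, by a route that avoids Theorem \ref{alg}. A seating is determined by the successive choices of audiences $1,\dots,n$. Audience $i$ has at most $\lfloor n/i\rfloor\geqslant1$ options, and leaves only when there are none, so $\theta(n)\leqslant\prod_{i=1}^n\lfloor n/i\rfloor$.

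Next, the sequences $(\lfloor n/j\rfloor)_{j\leqslant n}$ and $(\tau(i))_{i\leqslant n}$ have the same sum. For every $k$-subset $S\subseteq[n]$ one has $\sum_{i\in S}\tau(i)=\sum_{d\leqslant n}|\{i\in S:d\mid i\}|\leqslant\sum_{d\leqslant n}\min(k,\lfloor n/d\rfloor)=\sum_{j\leqslant k}\lfloor n/j\rfloor$. The last equality comes from counting pairs $(d,j)$ with $j\leqslant k$ and $dj\leqslant n$.

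Thus $(\lfloor n/j\rfloor)$ majorizes $(\tau(i))$. Karamata's inequality for the concave function $\log$ then gives $\prod_{j=1}^n\lfloor n/j\rfloor\leqslant\prod_{i=1}^n\tau(i)$.
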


\section{One Seat to Full}

In this section, we delve into a fascinating category of cinema mappings, which we refer to as \textit{one seat to full}. These mappings represent a unique arrangement of seats in a cinema hall, where only a single seat remains unoccupied. We will explore the properties and characteristics of these one seat to full cinema mappings and establish a compelling connection between them and a concept called \textit{conquester chains} within finite posets. By investigating this relationship, we aim to shed light on the intriguing combinatorial structures that underlie cinema seating arrangements.

\begin{definition}
Let $n$ be a positive integer. A cinema mapping $\sigma:[n]\to[n]$ is called \textit{one seat to full} if $|D_\sigma|=n-1$. The number of one seat to full cinema mappings with $n$ seats is denoted by $\omega(n)$.
\end{definition}

\begin{definition}
Let $(\mathsf{P},\preceq)$ be a finite poset and let $C=\{a_i\}_{i=1}^k$, with $c_1\prec \ldots\prec c_k$, be a chain in $\mathsf{P}$. We say that $C$ is a \textit{conquester chain} if $a_k$ is a maximal element of $\mathsf{P}$. 
\end{definition}

The term conquester is chosen to convey the idea that while these chains may not be maximal in themselves, they ascend towards the pinnacle of the poset, akin to conquering a mountaintop. This choice of terminology highlights the nature of these chains as they reach their zenith within the maximal elements of the poset.

\begin{proposition}
Let $n$ be a positive integer. Then there is a one-to-one correspondence between the set of all one seat to full cinema mappings with $n$ seats and the set of non-singleton conquester chains of $([n],|)$. 
\end{proposition}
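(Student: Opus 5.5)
The plan is to decode the structure of a one seat to full cinema mapping $\sigma$ completely and read off a chain from it. Throughout I use the seating rule from the problem statement, which the tables in Example \ref{n=5,6} reflect: an audience member is left unseated only if all of their permitted seats are occupied. Equivalently, for $m\notin D_\sigma$ every multiple of $m$ in $[n]$ lies in $R_\sigma$.

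\emph{Step 1: the missing audience.} Let $D_\sigma=[n]\setminus\{m\}$. By the divisor-first property, the complement of $D_\sigma$ is closed under taking multiples inside $[n]$. Hence $m$ has no proper multiple in $[n]$, that is, $m$ is a maximal element of $([n],|)$ (equivalently $2m>n$). Since $\sigma$ is injective, $|R_\sigma|=n-1$, so $R_\sigma=[n]\setminus\{s\}$ for a unique empty seat $s$. The seating rule applied to $m$ says seat $m$, its only permitted seat, is occupied. Thus $s\neq m$.

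\emph{Step 2: the functional digraph.} Regard $\sigma$ as a bijection $[n]\setminus\{m\}\to[n]\setminus\{s\}$. Its digraph $i\mapsto\sigma(i)$ on $[n]$ splits into disjoint cycles together with exactly one path. This path starts at $s$, which has no preimage, and ends at $m$, which has no image. Along any edge $i\mid\sigma(i)$, so $i\le\sigma(i)$ with equality only when $\sigma(i)=i$. A cycle of length at least two would therefore give a strictly increasing closed sequence, which is impossible. So every cycle is a fixed point. The path $s=c_1\mapsto c_2\mapsto\cdots\mapsto c_k=m$ is a chain $c_1\mid c_2\mid\cdots\mid c_k$ in $([n],|)$ with strict divisibility. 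Its top $c_k=m$ is maximal, and $k\ge2$ because $s\neq m$. Hence it is a non-singleton conquester chain. Moreover, $\sigma$ is the identity off this chain.

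\emph{Step 3: the inverse map and bijectivity.} Start from a non-singleton conquester chain $c_1\prec\cdots\prec c_k$ in $([n],|)$. Define $\sigma(c_i)=c_{i+1}$ for $i<k$, and $\sigma(j)=j$ for $j\notin\{c_1,\dots,c_k\}$. Then $D_\sigma=[n]\setminus\{c_k\}$ and $R_\sigma=[n]\setminus\{c_1\}$. The map $\sigma$ is injective and satisfies $i\mid\sigma(i)$. The divisor-first property holds because $c_k$ has no proper multiple in $[n]$. The seating rule holds because seat $c_k$ is occupied by $c_{k-1}$. The two constructions are mutually inverse: Step 2 shows that $\sigma$ is determined by its path, and the path recovered from this $\sigma$ is the chain we started with.

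The main obstacle is Step 1, specifically excluding the singleton chains. The formal definition of a cinema mapping does not state the seating rule explicitly, yet it is exactly what rules out the identity on $[n]\setminus\{m\}$ with seat $m$ left empty. I would justify using this rule from the problem statement and cross-check against Table $2$. For $n=6$ the maximal elements are $4,5,6$. The non-singleton chains are $\{1,4\}$, $\{2,4\}$, $\{1,2,4\}$, $\{1,5\}$, $\{1,6\}$, $\{2,6\}$, $\{3,6\}$, $\{1,2,6\}$ and $\{1,3,6\}$. That gives $9$ chains, matching $\sigma_2,\sigma_3,\sigma_5,\sigma_6,\sigma_8,\sigma_9,\sigma_{12},\sigma_{15},\sigma_{19}$.
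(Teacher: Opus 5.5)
Your proof is correct and takes the same route as the paper. You follow the orbit of the empty seat up to the missing audience, use divisor-first to show the top is maximal and the seating rule to exclude the singleton case, and invert by shifting along the chain. The paper leaves two points implicit that you make explicit: the seating rule, which the formal definition lacks, and the fact that all cycles are fixed points, which is what makes the correspondence injective.

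One small slip in your $n=6$ check, inherited from the paper's Example: the chain $\{1,6\}$ corresponds to $\sigma_{20}$, not $\sigma_{19}$, since $\sigma_{19}$ has two empty seats.
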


\begin{proof}
Let $\sigma:[n]\to[n]$ be a one seat to full cinema mapping and let $[n]\setminus D_\sigma=\{d\}$, i.e., $d$ is the audience number which is outside the cinema. We know that $\sigma$ is a bijection between $D_\sigma$ and $R_\sigma$. Thus $|R_\sigma|$ is also equal to $n-1$. Put $c$ to be the number of unoccupied seat. Clearly, $c\neq d$, since otherwise all of the other audiences should seat on their seat and then $\sigma(c)$ should be $c$ itself. This means that $c$ is in the domain of $\sigma$. Now consider the sequence
\[c, \sigma(c),\sigma(\sigma(c)),\sigma(\sigma(\sigma(c))),\ldots\]
The sequence should be terminated on some point, in the sense that $\sigma^{k}(c)=d$ for some positive integer $k$. Thus $C=\{\sigma^i(c)\}_{i=1}^k$ is a chain in $([n],|)$.

We claim that $C$ is a conquester chain. Suppose that $d=\sigma^{k}(c)$ is not a maximal element of $([n],|)$. This means that there is a multiple $d'> d$ of $d$ in $[n]$. This contradicts to the divisor-first property, since $d\notin D_\sigma$ but $d'\in D_\sigma$. Note that $C$ is not a singleton.

On the other hand, if $C=\{j_i\}_{i=1}^k$ is a non-singleton conquester chain in $([n],|)$, then the mapping $\sigma:[n]\to[n]$ defined by $\sigma(j_i)=j_{i+1}$ for $1\leqslant i\leqslant k-1$ and $\sigma(i)=i$  for $i\neq j_1,\ldots,j_k$ is a one seat to full cinema mapping with $D_\sigma=[n]\setminus\{j_k\}$ and $R_\sigma=[n]\setminus\{j_1\}$. 
\end{proof}

\begin{definition}
Let $n$ be a positive integer, and define $\Delta_n = \{d: d|n\}$. The poset $(\Delta_n,|)$ is referred to as \textit{the divisor poset corresponding to} $n$. We denote the number of conquester chains in $(\Delta_n,|)$ by $\psi(n)$.
\end{definition}

Note that the only maximal element of $(\Delta_n,|)$, which is its maximum, is $n$ itself. In the following we use the notation $d\Vert n$ to denote $d$ is a proper divisor of $n$, in the sense that $d|n$ but $d\neq n$.

\begin{lemma}
Let $n$ be a positive integer. Then $\psi(n) =1+ \sum_{d\Vert n} \psi(d)$, with the initial value $\psi(1)=1$. Furthermore, $\psi(p^\alpha)=2^\alpha$ for any prime $p$ and any positive integer $\alpha$.
\end{lemma}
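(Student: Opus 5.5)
Since $n$ is the unique maximal element of $(\Delta_n,|)$, a conquester chain in $\Delta_n$ is exactly a chain $c_1\prec\cdots\prec c_k$ of divisors of $n$ with $c_k=n$; a chain here is nonempty, so $\{n\}$ is allowed. The plan is to classify these chains by their second-largest element. Either the chain is the singleton $\{n\}$, or it has a largest element $d$ below $n$, which is a proper divisor $d\Vert n$.

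For the recursion, fix $d\Vert n$. Deleting the top element $n$ gives a map from conquester chains in $\Delta_n$ whose second-largest element is $d$ to chains in $\Delta_n$ with largest element $d$. The latter are exactly conquester chains in $(\Delta_d,|)$, because every element of such a chain divides $d$ and $d$ is the maximum of $\Delta_d$. Conversely, appending $n$ to any conquester chain of $\Delta_d$ yields a conquester chain of $\Delta_n$ with second-largest element $d$, since $d\mid n$ and divisibility is transitive. These two operations are mutually inverse, so the chains in this class number $\psi(d)$. Summing over the classes gives
\[
\psi(n)=1+\sum_{d\Vert n}\psi(d).
\]
For $n=1$ the sum is empty and $\Delta_1=\{1\}$, so $\psi(1)=1$, which is consistent with the formula.

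For prime powers, induct on $\alpha$, with $\psi(p^0)=\psi(1)=1=2^0$ as the base case. Assuming $\psi(p^j)=2^j$ for all $j<\alpha$, the recursion gives
\[
\psi(p^\alpha)=1+\sum_{j=0}^{\alpha-1}2^j=1+(2^\alpha-1)=2^\alpha.
\]
As an independent check, $\Delta_{p^\alpha}$ is the chain $1\prec p\prec\cdots\prec p^\alpha$. A conquester chain there is $\{p^\alpha\}$ together with an arbitrary subset of the remaining $\alpha$ elements, which again gives $2^\alpha$.

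There is no real obstacle here. The only point needing care is the bijection step: the top of the truncated chain must be the maximum of $\Delta_d$, and the singleton chain $\{n\}$ must be counted, since it is exactly the source of the $+1$.
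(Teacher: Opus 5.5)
Your proof is correct and follows the paper's argument: delete the top element $n$, so that the rest of the chain is a conquester chain of $(\Delta_d,|)$, where $d$ is the second-largest element, and the singleton $\{n\}$ supplies the $+1$. For prime powers the paper uses exactly your ``independent check'' (a conquester chain is $p^\alpha$ together with an arbitrary subset of $\{1,p,\ldots,p^{\alpha-1}\}$), so your induction via the recursion is an equally valid second route.
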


\begin{proof}
Let $C=\{d_i\}_{i=1}^k$ be a conquester chain in $(\Delta_n,|)$. If $k=1$ then $C$ is a singleton and if $k>1$ then $d_k=n$ and $d_{k-1}$ is a proper divisor of $n$. Hence $C'=\{d_i\}_{i=1}^{k-1}$ is a conquester chain in $(\Delta_{d_{k-1}},|)$. This proves that $\psi(n)= 1+ \sum_{d\Vert n} \psi(d)$ which is a multiplicative function. 

Remarkably, for every prime $p$ and each positive integer $\alpha$, we find that $\psi(p^\alpha) = 2^\alpha$. This is due to the fact that any conquester chain $\{c_i\}_{i=1}^k$ in $(\Delta_{p^\alpha},|)$ satisfies the property that $c_k=p^\alpha$, and $\{c_i\}_{i=1}^{k-1}$ is a subset of $\{1,p,\ldots,p^{\alpha-1}\}$. 
\end{proof}

\begin{theorem}
Let $n$ be a positive integer. Then the number of one seat to full cinema mappings is determined by $\omega(n)=-n+\sum_{i=1}^n\psi(i)$.
\end{theorem}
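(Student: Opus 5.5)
The plan is to count one seat to full cinema mappings through the bijection already established in the Proposition relating them to non-singleton conquester chains of $([n],|)$. Then I will sort those chains by their top element and evaluate each class with $\psi$. Concretely, given a one seat to full mapping $\sigma$ with absent audience $d$ and empty seat $c$, the Proposition produces the chain $c\mid\sigma(c)\mid\cdots\mid\sigma^k(c)=d$, whose top element is $d$. So I will partition the set of such mappings according to the value $i=d\in[n]$ of the audience member left outside.

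For a fixed top element $i$, a chain $\{j_1\mid j_2\mid\cdots\mid j_k\}$ in $([n],|)$ with $j_k=i$ lies entirely in $\Delta_i$. It is therefore exactly a conquester chain of the divisor poset $(\Delta_i,|)$, since $i$ is the unique maximal element there. Conversely, every conquester chain of $(\Delta_i,|)$ is a chain of $([n],|)$ with top $i$. By the definition of $\psi$, the number of such chains is $\psi(i)$. Exactly one of them is the singleton $\{i\}$, which the Proposition excludes. Hence the class indexed by $i$ should contribute $\psi(i)-1$, and summing over $i=1,\dots,n$ gives
\[
\omega(n)=\sum_{i=1}^n(\psi(i)-1)=-n+\sum_{i=1}^n\psi(i).
\]
For the class count itself I only need the definition of $\psi$; the recursion $\psi(i)=1+\sum_{d\Vert i}\psi(d)$ of the preceding Lemma is not required.

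The main obstacle is the step where each top element $i\in[n]$ is taken to give an admissible class. The Proposition requires the top of the chain, namely the absent audience $d$, to be maximal in $([n],|)$: the divisor-first property forbids $d\notin D_\sigma$ while some multiple of $d$ in $[n]$ lies in $D_\sigma$. Before writing this step I will check that requirement directly against each $i$, starting from the explicit inverse map in the Proposition, $\sigma(j_t)=j_{t+1}$ and $\sigma(m)=m$ otherwise. I will also test the summation on the data of Table $1$ ($n=5$) to confirm the range of $i$ over which the classes are nonempty. Only after these checks would I pass from the per-class count $\psi(i)-1$ to the full sum.
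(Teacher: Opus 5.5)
The obstacle you flagged is not a formality: the check fails, and the identity fails with it. In a one seat to full mapping the absent audience $d$ must be maximal in $([n],|)$, i.e.\ $2d>n$. If $2d\le n$, the divisor-first property would force audience $2d$ outside as well. So the class indexed by $i$ is empty whenever $2i\le n$; it does not have size $\psi(i)-1$. Your second paragraph slides from ``chain of $([n],|)$ with top $i$'' to ``conquester chain of $([n],|)$ with top $i$'', and these agree only for $i>n/2$. What your partition by top element actually proves is
\[
\omega(n)=\sum_{n/2<i\leqslant n}\bigl(\psi(i)-1\bigr).
\]
This differs from $-n+\sum_{i=1}^n\psi(i)$ by $\sum_{i\leqslant n/2}\bigl(\psi(i)-1\bigr)$. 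That difference is positive as soon as $n\geqslant 4$, since it contains the term $\psi(2)-1=1$.

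The Table 1 test you proposed makes this concrete. For $n=5$ the one seat to full mappings are $\rho_2,\rho_3,\rho_4,\rho_6,\rho_7$, corresponding to the chains $\{2,4\},\{1,2,4\},\{1,3\},\{1,4\},\{1,5\}$. So $\omega(5)=5$, whereas $-5+(1+2+2+4+2)=6$. The surplus is the chain $\{1,2\}$, which is not conquester in $([5],|)$ because $2\mid 4$. Similarly $\omega(4)=4$ against $5$, and $\omega(6)=9$ against $11$; the $9$ matches the paper's own list, with $\sigma_{20}$ in place of $\sigma_{19}$. The statement as written is therefore false and cannot be proved.

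The paper's argument has the same defect in recursive form. Its recursion $\omega(n)=\omega(n-1)+\psi(n)-1$ assumes every non-singleton conquester chain of $([n-1],|)$ stays conquester in $([n],|)$. For even $n$ this fails for the $\psi(n/2)-1$ such chains topped at $n/2$, and the correct recursion subtracts that term. Your top-element decomposition, restricted to $n/2<i\leqslant n$, is a clean way to state and prove the corrected count.
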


\begin{proof}
For a non-singleton conquester chain $C=\{j_i\}_{i=1}^k$ in $([n],|)$, we have $1\leqslant j_k\leqslant n-1$ or $j_k=n$. This means that $C$ is a non-singleton conquester chain in $([n-1],|)$ or a non-singleton conquester chain in $(\Delta_n,|)$, which gives $\omega(n)=\omega(n-1)+\psi(n)-1$. Solving the latter recursion for $\omega$, we arrive at the result.
\end{proof}

\section{Last Audience Is in the Waiting List}

In this section, we explore the fascinating world of cinema mappings with a distinct focus on an intriguing category \textit{last audience is in the waiting list}. We delve into the definitions, properties, and recursive structures that govern these cinema mappings, providing valuable insights into their combinatorial nature. Through our exploration, we uncover the recursive patterns that underlie the enumeration of these cinema mappings, allowing us to quantify their abundance.

\begin{definition}
Let $n$ be a positive integer. A cinema mapping $\sigma:[n]\to[n]$ with $n$ seats is called \textit{almost full} if $D_\sigma\supseteq[n-1]$. 
\end{definition}
In the following we prove that the number of almost full cinema mappings with $n$ seats is in fact equal to the number of conquester chains in $(\Delta_n,|)$ which is $\psi(n)$. 
\begin{theorem}
Let $n$ be a positive integer. Then the number of almost full cinema mappings satisfies the recursion $\psi(n)=1+\sum_{d\Vert n}\psi(d)$, with the initial value $\psi(1)=1$. 
\end{theorem}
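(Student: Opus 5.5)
We will prove the statement by exhibiting a bijection between almost full cinema mappings with $n$ seats and conquester chains of $(\Delta_n,|)$. By the preceding Lemma, the latter are counted by $\psi(n)=1+\sum_{d\Vert n}\psi(d)$ with $\psi(1)=1$, so the recursion follows at once. For $n=1$ the only almost full mapping is the empty one ($D_\sigma=\emptyset\supseteq[0]$), since the unique nonempty mapping has $D_\sigma=[1]$ and also qualifies. This needs a little care with the convention that $\{1\}$ is the one conquester chain, giving the value $1$. Checking against Table~2, there are $6=\psi(6)$ such mappings, which supports the plan.

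\textbf{Splitting into two kinds.} Let $\sigma$ be almost full, so $[n-1]\subseteq D_\sigma$.

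\emph{Case $n\in D_\sigma$.} Then $\sigma$ is a bijection of $[n]$ with $i|\sigma(i)$. Since $\sigma(i)\ge i$ for every $i$, induction from $n$ downward gives $\sigma=\mathrm{id}$. This mapping will correspond to the singleton chain $\{n\}$.

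\emph{Case $D_\sigma=[n-1]$.} Then exactly one seat $c$ is empty. The same argument as in the One Seat to Full proposition shows that the sequence $c,\sigma(c),\sigma^2(c),\dots$ is a divisibility chain that must end at the missing audience $n$. Every $i$ off this orbit satisfies $\sigma(i)=i$, again by the downward $\sigma(i)\ge i$ argument applied to the remaining bijection. Hence $\sigma$ is determined by a chain $c=j_1\,|\,j_2\,|\cdots|\,j_k=n$ with $k\ge 2$ and $j_i\in\Delta_n$. These are exactly the non-singleton conquester chains of $(\Delta_n,|)$.

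\textbf{The converse construction.} Conversely, given such a chain, set $\sigma(j_i)=j_{i+1}$ for $i<k$ and $\sigma(i)=i$ elsewhere on $[n-1]$. This is injective and has the seat-multiple-ticket property. The divisor-first property is automatic because $D_\sigma=[n-1]$ is closed under divisors.

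The main obstacle is justifying that the orbit of $c$ lies in $\Delta_n$, i.e.\ that it terminates at $n$. Each step is a divisibility relation, and the orbit can only stop at an element not in $D_\sigma$, which is $n$. Hence every chain element divides $n$. The remaining point is to confirm that orbits are not cycles: a cycle under $i|\sigma(i)$ forces fixed points, and those are excluded from the chain.
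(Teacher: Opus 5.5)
Your route differs from the paper's. The paper works directly on the mappings. When $n\notin D_\sigma$ it takes $d=\sigma^{-1}(n)$, argues that audiences $d+1,\dots,n-1$ are fixed, and reads off an almost full mapping on seats $1,\dots,d$, so the recursion appears without mentioning chains. You instead build an explicit bijection, through the orbit of the empty seat, between almost full mappings and conquester chains of $(\Delta_n,|)$, and then import the recursion from the preceding Lemma. This gives an actual proof of the remark before the theorem, that almost full mappings are equinumerous with conquester chains; the paper only gets this by matching recursions. The orbit picture also makes it clear why everything off the chain is fixed. One small point to add: say why the orbit contains no fixed point. By injectivity, $\sigma(x)=x$ for some $x$ on the orbit of $c$ would force $c\in R_\sigma$.

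The step that genuinely needs an extra idea is your claim $k\ge 2$, i.e.\ that the empty seat $c$ is not $n$ itself. The three defining properties do not exclude $c=n$. The identity on $[n-1]$ with seat $n$ empty is injective, has the divisor-first and seat-multiple-ticket properties, and is almost full. Its orbit is $\{n\}$, the chain you already assigned to the identity on $[n]$. Under the formal definition alone you would therefore count $\psi(n)+1$ mappings.

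What excludes this case is the seating rule of the original problem. An audience member leaves only when all permitted seats are taken, so audience $n$ would have sat in a free seat $n$. The paper uses this rule implicitly: in ``$c\neq d$'' in the One Seat to Full proposition, and in ``$\sigma(d)=n$ for some $d\Vert n$'' in its proof of this theorem. State the rule explicitly. Then the $n=1$ case needs no special convention: the empty mapping is not a cinema mapping, and the unique almost full mapping is $\sigma(1)=1$, which is your case $n\in D_\sigma$ with chain $\{1\}$. As written, your $n=1$ paragraph contradicts itself, and read literally it counts exactly the extra mapping that breaks your bijection.
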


\begin{proof}
There are two cases: $n\in D_\sigma$ or $n\notin D_\sigma$. The first case gives us the identity mapping and the second case occurs when $\sigma(d)=n$ for some $d\Vert n$. 

If the latter case occurs, then we divide the seats into two parts: Part I consisting of $1,\ldots,d$ and Part II consisting of $d+1,\ldots,n$.

Each audience member in Part II should sit in their assigned seat; otherwise, we would have to hold somebody other than $n$ behind the cinema door. Therefore, Part I should be almost full.

These considerations guarantee that $\psi(n)=1+\sum_{d\Vert n}\psi(d)$.
\end{proof}

\begin{lemma}\label{recurpsi}
Let $p$ be a prime number and let $m$ and $\alpha$ be two positive integers. Then $\psi(p^{\alpha+1} m)=2\psi(p^{\alpha} m)+\sum_{d\Vert m}\psi(p^{\alpha+1} d)$.
\end{lemma}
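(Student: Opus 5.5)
The plan is to derive the identity directly from the recursion $\psi(N)=1+\sum_{e\Vert N}\psi(e)$ of the preceding lemma, applied twice. The approach is a bookkeeping argument on the divisor lattice $\Delta_{p^{\alpha+1}m}$, which splits into the divisors of $p^{\alpha}m$ and the remaining ones. I will assume, as is clearly intended, that $p\nmid m$; see the remark at the end.

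Put $N=p^{\alpha+1}m$ and $M=p^{\alpha}m=N/p$. By the recursion,
\[
\psi(N)=1+\sum_{e\Vert N}\psi(e).
\]
Split the proper divisors $e$ of $N$ into two classes.
\begin{enumerate}
\item[(a)] Those with $e\mid M$. These are all divisors of $M$, including $M$ itself.
\item[(b)] Those with $e\nmid M$. These are the divisors with $v_p(e)=\alpha+1$.
\end{enumerate}
The class (a) contributes $\psi(M)+\sum_{e\Vert M}\psi(e)$. Combining this with the leading $1$ and applying the recursion once more, to $M$, gives
\[
1+\sum_{e\Vert M}\psi(e)+\psi(M)=2\psi(M).
\]

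For the class (b), I use $p\nmid m$. A divisor $e$ of $N$ with $v_p(e)=\alpha+1$ has the form $e=p^{\alpha+1}d$ with $d\mid m$. The condition $e\neq N$ is equivalent to $d\neq m$, so the class (b) is exactly $\{p^{\alpha+1}d: d\Vert m\}$. The map $d\mapsto p^{\alpha+1}d$ is a bijection onto it, so the class contributes $\sum_{d\Vert m}\psi(p^{\alpha+1}d)$. Adding the two contributions yields the stated formula.

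The only delicate step is the description of the class (b), and this is where the hypothesis $\gcd(p,m)=1$ is needed. Without it, the divisors of $N$ not dividing $N/p$ correspond only to those proper divisors $d$ of $m$ with $v_p(d)=v_p(m)$, and the identity can fail. I would therefore state explicitly at the start of the proof that $m$ is coprime to $p$, which is the setting relevant for $n=p^{\alpha}q^{\beta}$. Everything else is routine reindexing.
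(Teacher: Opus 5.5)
Your proof is correct and is the same argument as the paper's: split the proper divisors of $p^{\alpha+1}m$ into the divisors of $p^{\alpha}m$, using the recursion for $p^{\alpha}m$ to absorb the leading $1$, and the divisors $p^{\alpha+1}d$ with $d\Vert m$. Your explicit hypothesis $p\nmid m$ is genuinely needed; the paper uses it only tacitly, and the identity fails without it (e.g.\ $p=m=2$, $\alpha=1$ gives $\psi(8)=8$, whereas the right side is $2\psi(4)+\psi(4)=12$).
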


\begin{proof}
We have
\begin{align*}
\psi(p^{\alpha+1} m)&=1+\sum_{d\Vert p^{\alpha+1} m}\psi(d)\\
&=1+\sum_{d\Vert p^{\alpha} m}\psi(d)+ \psi(p^{\alpha} m) +\sum_{d\Vert m}\psi(p^{\alpha+1} d)\\
&=2\psi(p^{\alpha} m)+\sum_{d\Vert m}\psi(p^{\alpha+1} d).
\end{align*}

\end{proof}

\begin{lemma}\label{q1}
Let $p$ and $q$ be two distinct prime numbers and let $\alpha$ be a positive integer. Then $\psi(p^\alpha q)=2^\alpha(\alpha+2)$.
\end{lemma}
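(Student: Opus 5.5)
We argue by induction on $\alpha$, using Lemma \ref{recurpsi} with $m=q$ together with the formula $\psi(p^\alpha)=2^\alpha$ from the earlier lemma. Since the only proper divisor of $q$ is $1$, Lemma \ref{recurpsi} with $m=q$ reads
\[
\psi(p^{\alpha+1}q)=2\psi(p^{\alpha}q)+\psi(p^{\alpha+1}) = 2\psi(p^\alpha q)+2^{\alpha+1}.
\]
This is a first-order linear recursion in $\alpha$. The inductive step is immediate: if $\psi(p^\alpha q)=2^\alpha(\alpha+2)$, then
\[
\psi(p^{\alpha+1}q)=2^{\alpha+1}(\alpha+2)+2^{\alpha+1}=2^{\alpha+1}(\alpha+3),
\]
which is the claimed formula with $\alpha$ replaced by $\alpha+1$.

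For the base case $\alpha=1$ we compute directly from $\psi(n)=1+\sum_{d\Vert n}\psi(d)$. The proper divisors of $pq$ are $1,p,q$, with $\psi(1)=1$ and $\psi(p)=\psi(q)=2$. Hence
\[
\psi(pq)=1+1+2+2=6=2^1(1+2),
\]
as required.

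The one point needing care is that Lemma \ref{recurpsi} is stated for positive $\alpha$, so it cannot be used to derive the case $\alpha=1$ from $\alpha=0$. That is exactly why the base case is computed by hand. Alternatively, one can check that the formula also holds at $\alpha=0$, since $\psi(q)=2=2^0(0+2)$, and that the derivation of the recursion in Lemma \ref{recurpsi} goes through verbatim for $\alpha=0$; either route closes the argument.
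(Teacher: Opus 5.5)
Your proof is correct and follows the paper's argument: induction on $\alpha$, with the base case $\psi(pq)=1+1+2+2=6$ computed directly from the divisor recursion, and the inductive step obtained from Lemma~\ref{recurpsi} with $m=q$ together with $\psi(p^{\alpha+1})=2^{\alpha+1}$. Your side remark that the recursion also holds at $\alpha=0$ is accurate but not needed.
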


\begin{proof}
We use induction on $\alpha$. For $\alpha=1$ we have $\psi(pq)=1+\sum_{d\Vert pq}\psi(d)=1+\psi(1)+\psi(p)+\psi(q)=1+1+2+2=2^1(1+2)$. 

Let $\psi(p^\alpha q)=2^\alpha(\alpha+2)$. Then Lemma \ref{recurpsi} implies
\[\psi(p^{\alpha+1}q)=2\psi(p^\alpha q)+\sum_{d\Vert q}\psi(p^{\alpha+1}d)=2\cdot2^\alpha(\alpha+2)+\psi(p^{\alpha+1}) =2^{\alpha+1}(\alpha+3).\]
\end{proof}

In our concluding findings, as presented in Theorems \ref{pqfirst} and \ref{pqsecond}, we provide distinct methods for the explicit evaluation of $\psi(n)$ when $n$ has precisely two distinct prime factors. 

\begin{theorem}\label{pqfirst}
Let $p$ and $q$ be two distinct prime numbers and let $\alpha\geqslant \beta$ be two positive integers. Then $\psi(p^\alpha q^\beta)=2^\alpha \sum_{i=0}^\beta \binom{\beta}{i}\binom{\alpha+i}{i}$.
\end{theorem}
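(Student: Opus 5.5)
Set $f(\alpha,\beta)=\psi(p^\alpha q^\beta)$ and $g(\alpha,\beta)=2^\alpha\sum_{i=0}^\beta\binom{\beta}{i}\binom{\alpha+i}{i}$. The plan is to show that $f$ and $g$ satisfy the same two-variable recurrence with the same boundary values. The formula $g$ is symmetric in the sense that the hypothesis $\alpha\geqslant\beta$ is not really needed, so I will prove it for all $\alpha,\beta\geqslant 0$. Cross-checks: $g(\alpha,0)=2^\alpha=\psi(p^\alpha)$, and $g(\alpha,1)=2^\alpha(1+\alpha+1)=2^\alpha(\alpha+2)$, which agrees with Lemma \ref{q1}. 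For $\alpha=0$ we get $g(0,\beta)=\sum_i\binom{\beta}{i}=2^\beta=\psi(q^\beta)$, which is also correct.

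\textbf{Step 1: a finite-difference recurrence for $f$.} Applying $\psi(n)=1+\sum_{d\Vert n}\psi(d)$ to $n$ and to $n/p$ and subtracting collapses the divisor sum. More cleanly, write $S(\alpha,\beta)=\sum_{a\le\alpha,b\le\beta}f(a,b)$, so that $f=1+S-f$, i.e.\ $2f(\alpha,\beta)=1+S(\alpha,\beta)$ for $(\alpha,\beta)\ne(0,0)$. Taking the mixed second difference in both variables kills the constant and turns $S$ into $f$. For $\alpha,\beta\geqslant1$ this gives
\[2\bigl(f(\alpha,\beta)-f(\alpha-1,\beta)-f(\alpha,\beta-1)+f(\alpha-1,\beta-1)\bigr)=f(\alpha,\beta),\]
that is,
\[f(\alpha,\beta)=2f(\alpha-1,\beta)+2f(\alpha,\beta-1)-2f(\alpha-1,\beta-1).\]
This needs care at the boundary, because $S$ involves $f(0,0)$, for which the identity $2f=1+S$ fails. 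I will check whether the mixed difference actually touches the $(0,0)$ term. For $\alpha=\beta=1$ it does: $f(1,1)=6$, and $2\cdot2+2\cdot2-2\cdot1=6$, so the recurrence still holds there. I will verify this in general by noting that the defect at $(0,0)$ is a constant in $S$ over all boxes containing the origin, so it cancels in the mixed difference.

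\textbf{Step 2: show $g$ satisfies the same recurrence.} Writing $g=2^\alpha h(\alpha,\beta)$ with $h(\alpha,\beta)=\sum_i\binom{\beta}{i}\binom{\alpha+i}{i}$, the recurrence becomes
\[h(\alpha,\beta)=h(\alpha-1,\beta)+2h(\alpha,\beta-1)-h(\alpha-1,\beta-1),\]
which is the Delannoy-type recurrence. To prove it, I use the generating function: $\sum_\beta h(\alpha,\beta)y^\beta=\sum_i\binom{\alpha+i}{i}y^i/(1-y)^{i+1}$. Summing over $\alpha$ with weight $x^\alpha$ gives $\sum_i x^0\ldots$. More concretely, $\sum_{\alpha,\beta}h\,x^\alpha y^\beta=\frac{1}{1-y}\sum_i\frac{(y/(1-y))^i}{(1-x)^{i+1}}=\frac{1}{(1-x)(1-y)-y}=\frac{1}{1-x-2y+xy}$. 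This is exactly the rational function encoding the stated recurrence with initial value $1$. Alternatively, Pascal's rule can be applied to both binomials directly.

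\textbf{Step 3: match the boundary rows and conclude.} The boundary rows agree: $f(\alpha,0)=2^\alpha=g(\alpha,0)$ and $f(0,\beta)=2^\beta=g(0,\beta)$. Induction on $\alpha+\beta$ then finishes the proof. I expect the main obstacle to be Step 1's boundary bookkeeping, namely the exceptional value at $n=1$ in the recursion. If the mixed-difference argument gets messy, I will instead iterate Lemma \ref{recurpsi} with $m=q^\beta$, which expresses the $p$-difference directly, and then take a $q$-difference of that identity.
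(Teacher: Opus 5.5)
Your argument is correct, but it takes a different route from the paper's. The paper inducts on $\alpha+\beta$ using only the $p$-difference identity of Lemma~\ref{recurpsi}, $\psi(p^kq^\ell)=2\psi(p^{k-1}q^\ell)+\sum_{j=0}^{\ell-1}\psi(p^kq^j)$. That recurrence is nonlocal, so the paper substitutes the closed form into the whole row sum, swaps the order of summation, and finishes with the hockey-stick identity $\sum_{j=i}^{\ell-1}\binom{j}{i}=\binom{\ell}{i+1}$ and Pascal's rule.

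You difference once more in the $q$-direction. This gives the local three-term recurrence $f(\alpha,\beta)=2f(\alpha-1,\beta)+2f(\alpha,\beta-1)-2f(\alpha-1,\beta-1)$. You then check the closed form via $\sum h\,x^\alpha y^\beta=1/(1-x-2y+xy)$, i.e.\ $\sum g\,x^\alpha y^\beta=1/(1-2x-2y+2xy)$.

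What each route buys:
\begin{itemize}
\item The paper's route needs nothing beyond elementary binomial identities.
\item Yours proves the formula for all $\alpha,\beta\geqslant0$. The symmetry of $1/(1-2x-2y+2xy)$ in $x,y$ explains why the hypothesis $\alpha\geqslant\beta$ is superfluous, even though the binomial sum does not look symmetric.
\item Yours also sidesteps the paper's case $k=\ell$. There $\psi(p^{k-1}q^{k})$ is not literally covered by the paper's induction hypothesis, which is restricted to $r\geqslant s$.
\end{itemize}

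One correction: the identity $2f=1+S$ does not fail at $(0,0)$. Since $\psi(1)=1$ agrees with the recursion (the sum over proper divisors of $1$ is empty), $2f(0,0)=2=1+S(0,0)$. So the boundary worry, and the cancellation argument you sketch for it, are unnecessary. In fact, multiplying $2f=1+S$ by $x^\alpha y^\beta$ and summing gives directly $F(x,y)=1/\bigl(2(1-x)(1-y)-1\bigr)=1/(1-2x-2y+2xy)$. This matches your generating function for $g$ and makes the recurrence-plus-induction bookkeeping of Steps~1 and~3 superfluous.
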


\begin{proof}
We use induction on $\alpha+\beta$. For $\alpha+\beta=2$, according to Lemma \ref{q1}, we have 
\[\psi(pq)=6= 2\Big(\binom{1}{0}\binom{1+0}{0}+\binom{1}{1}\binom{1+1}{1}\Big).\] 

Let $\psi(p^r q^s)=2^r \sum_{i=0}^s \binom{s}{i}\binom{r+i}{i}$, for each $r$ and $s$ with $r\geqslant s$ and $r+s\leqslant\alpha+\beta$. For two arbitrary positive integers $k$ and $\ell$ with $k\geqslant\ell$ and $k+\ell=\alpha+\beta+1$, we have
\begin{align*}
\psi(p^kq^\ell)&=2\psi(p^{k-1} q^{\ell})+\sum_{d\Vert q^{\ell}}\psi(p^{k}d)\\
&=2\psi(p^{k-1} q^{\ell})+\sum_{j=0}^{\ell-1}\psi(p^k q^j) \\
&=2\psi(p^{k-1} q^{\ell}) +\sum_{j=0}^{\ell-1}2^k\sum_{i=0}^j\binom{j}{i}\binom{k+i}{i}\\
&=2\psi(p^{k-1} q^{\ell}) +2^k\sum_{i=0}^{\ell-1}\binom{k+i}{i}\sum_{j=i}^{\ell-1}\binom{j}{i}\\
&=2^k\sum_{i=0}^\ell\binom{\ell}{i}\binom{k-1+i}{i} +2^k\sum_{i=0}^{\ell-1}\binom{k+i}{i}\binom{\ell}{i+1}\\
&=2^k\sum_{i=0}^\ell\binom{\ell}{i}\binom{k-1+i}{i} +2^k\sum_{i=1}^{\ell}\binom{\ell}{i}\binom{k+i-1}{i-1}\\
&=2^k\sum_{i=1}^\ell\binom{\ell}{i}\Big[\binom{k-1+i}{i} +\binom{k+i-1}{i-1}\Big]+2^k\\
&=2^k\sum_{i=0}^\ell\binom{\ell}{i}\binom{k+i}{i}.
\end{align*}
Note that the above computations is valid by the induction hypothesis, since $k-1+\ell=\alpha+\beta$ and $k+j\leqslant k+\ell-1\leqslant \alpha+\beta$. Moreover, note that the above arguments are correct also in the case $k=\ell$.
\end{proof}

\begin{theorem}\label{pqsecond}
Let $p$ and $q$ be two distinct prime numbers and let $\alpha\geqslant \beta$ be two positive integers. Suppose also that $f(x)=x^\alpha(x+1)^\beta$. Then $\psi(p^\alpha q^\beta)=\frac{2^\alpha f^{(\alpha)}(1)}{\alpha!}$.
\end{theorem}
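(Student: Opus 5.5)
The plan is to derive Theorem \ref{pqsecond} directly from Theorem \ref{pqfirst}. Since both are stated under the same hypotheses $\alpha\geqslant\beta\geqslant 1$, it suffices to prove the polynomial identity
\[\frac{f^{(\alpha)}(1)}{\alpha!}=\sum_{i=0}^\beta\binom{\beta}{i}\binom{\alpha+i}{i},\qquad f(x)=x^\alpha(x+1)^\beta.\]
Multiplying both sides by $2^\alpha$ and invoking Theorem \ref{pqfirst} then gives the claim. No further use of the recursion for $\psi$ is needed.

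To compute $f^{(\alpha)}(1)/\alpha!$, I recall that this quantity is the coefficient of $(x-1)^\alpha$ in the Taylor expansion of $f$ about $x=1$. So I substitute $x=1+t$ and extract the coefficient of $t^\alpha$ in
\[(1+t)^\alpha(2+t)^\beta.\]
A naive expansion produces powers of $2$, which do not match the target sum. I would therefore rewrite $2+t=1+(1+t)$, which gives
\[(1+t)^\alpha(2+t)^\beta=\sum_{i=0}^\beta\binom{\beta}{i}(1+t)^{\alpha+i}.\]
The coefficient of $t^\alpha$ in $(1+t)^{\alpha+i}$ is $\binom{\alpha+i}{\alpha}=\binom{\alpha+i}{i}$. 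Summing over $i$ yields exactly $\sum_{i=0}^\beta\binom{\beta}{i}\binom{\alpha+i}{i}$, as required.

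As an alternative check, one can apply the Leibniz rule directly:
\[\frac{f^{(\alpha)}(1)}{\alpha!}=\sum_{j}\binom{\alpha}{j}\binom{\beta}{\alpha-j}2^{\beta-\alpha+j}.\]
Here $j$ ranges over the indices with $\alpha-j\leqslant\beta$. This form would then have to be matched to the sum in Theorem \ref{pqfirst} by a binomial identity. The substitution $2+t=1+(1+t)$ is designed to avoid that step.

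The main obstacle is this matching step. The expression $f^{(\alpha)}(1)/\alpha!$ naturally carries powers of $2$, whereas the sum in Theorem \ref{pqfirst} does not. The regrouping $2+t=1+(1+t)$ is what resolves this, so I would try it first. If I needed a fallback, I would verify the identity by induction on $\beta$, using $(x+1)^{\beta+1}=(x+1)^\beta\cdot(x+1)$ together with Pascal's rule. That mirrors the final steps of the proof of Theorem \ref{pqfirst}.

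The case $\alpha=\beta$ requires no separate treatment, since the argument never used the inequality between $\alpha$ and $\beta$.
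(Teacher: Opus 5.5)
Your proposal is correct and is essentially the paper's proof. The paper also expands $(x+1)^\beta=\sum_{i}\binom{\beta}{i}x^i$, computes $\frac{1}{\alpha!}\frac{d^\alpha}{dx^\alpha}x^{\alpha+i}=\binom{\alpha+i}{i}x^i$ term by term, sets $x=1$, and invokes Theorem \ref{pqfirst}; your substitution $x=1+t$ with the regrouping $2+t=1+(1+t)$ is that same binomial expansion written in the shifted variable, with the Taylor-coefficient reading standing in for the termwise derivative computation.
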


\begin{proof}
Let $g_i(x)=x^{\alpha+i}$. Thus
\[f(x)=x^\alpha\sum_{i=0}^\beta\binom{\beta}{i} x^i =\sum_{i=0}^\beta \binom{\beta}{i}g_i(x).\]
It is straightforward to see that $\frac{g^{(\alpha)}(x)}{\alpha!}=\binom{\alpha+i}{i}x^i$. Thus we have
\[
\frac{f^{(\alpha)}(x)}{\alpha!}=\sum_{i=0}^\beta \binom{\beta}{i}\frac{g^{(\alpha)}(x)}{\alpha!}
=\sum_{i=0}^\beta \binom{\beta}{i}\binom{\alpha+i}{i}x^i.
\]
Now, according to Theorem \ref{pqfirst}, we have the result. 
\end{proof}

\begin{example}
We calculate $\psi(p^\alpha q^3)$ for any positive integer $\alpha$ greater than or equal to 3. Using Theorem \ref{pqfirst}, we can express it as:
\begin{align*}
\psi(p^\alpha q^3)&=2^\alpha\sum_{i=0}^3\binom{3}{i} \binom{\alpha+i}{i}\\
&=\binom{3}{0}\binom{\alpha+0}{0}
+\binom{3}{1}\binom{\alpha+1}{1}
+\binom{3}{2}\binom{\alpha+2}{2}
+\binom{3}{3}\binom{\alpha+3}{3}\\
&= 1+3(\alpha+1)+3\cdot \frac{(\alpha+2)(\alpha+1)}{2}+\frac{(\alpha+3)(\alpha+2)(\alpha+1)}{6}\\
&=\frac{1}{6}(\alpha^3+12\alpha^2+38\alpha+36).
\end{align*}
As an alternative method, we can represent $f(x)=x^\alpha(x+1)^3=x^{\alpha+3}+3x^{\alpha+2} +3x^{\alpha+1}+x^\alpha$ and employ Theorem \ref{pqsecond} to calculate:
\begin{align*}
\frac{f^{(\alpha)}(1)}{3!}&=\binom{\alpha+3}{3}1^3+3\binom{\alpha+2}{2}1^2 +3\binom{\alpha+1}{1}1^1+\binom{\alpha+0}{0}1^0.
\end{align*}

\end{example}

\bibliographystyle{amsplain}

\begin{thebibliography}{99}

\bibitem{BermanKohler1976} J. Berman, and P. K\"{o}hler, (1976). \emph{Cardinalities of finite distributive lattices.} Mitt. Math. Sem. Giessen, 121, 103–124. 

\bibitem{Church1940} R. Church, (1940). \emph{Numerical analysis of certain free distributive structures.} Duke Mathematical Journal, 6(3), 732–734. 

\bibitem{Dedekind1897} R. Dedekind, (1897). \emph{\"{U}ber Zerlegungen von Zahlen durch ihre gr\"{o}\ss ten gemeinsamen Teiler.} Gesammelte Werke, Vol. 2, pp. 103–148.

\bibitem{Kisielewicz1988} A. Kisielewicz, (1988). \emph{A solution of Dedekind's problem on the number of isotone Boolean functions.} Journal f\"{u}r die Reine und Angewandte Mathematik, 1988(386), 139–144. 

\bibitem{KleitmanMarkowsky1975} D. Kleitman, and G. Markowsky, (1975). \emph{On Dedekind's problem: the number of isotone Boolean functions. II.} Transactions of the American Mathematical Society, 213, 373–390. 

\bibitem{Korshunov1981} A. D. Korshunov, (1981). \emph{The number of monotone Boolean functions.} Problemy Kibernet., 38, 5–108. MR 0640855.

\bibitem{Wiedemann1991} D. Wiedemann, (1991). \emph{A computation of the eighth Dedekind number.} Order, 8(1), 5–6. 

\bibitem{Zaguia1993} N. Zaguia, (1993). \emph{Isotone maps: enumeration and structure.} In N. W. Sauer, R. E. Woodrow, and B. Sands (Eds.), Finite and Infinite Combinatorics in Sets and Logic (Proc. NATO Advanced Study Inst., Banff, Alberta, Canada, May 4, 1991), pp. 421–430. Kluwer Academic Publishers. 

\end{thebibliography}

\section*{Statements and Declarations}

\textbf{Funding}
The authors declare that no funds, grants, or other support were received during the preparation of this manuscript.
\end{document}